\documentclass{elsarticle}
\usepackage{amsmath}
\usepackage{graphicx}
\usepackage{hyperref}
\usepackage{mathtools}
\usepackage{amsfonts}
\usepackage{amssymb}
\usepackage{color}
\usepackage{amsthm}
\usepackage{fullpage}
\usepackage{enumitem}

\newtheorem{theorem}{Theorem}[section]

\newtheorem{lemma}[theorem]{Lemma}

\theoremstyle{definition}
\newtheorem{definition}[theorem]{Definition}

\theoremstyle{remark}
\newtheorem{remark}[theorem]{Remark}

\numberwithin{equation}{section}

\newcommand{\F}{\mathbb{F}_q}
\newcommand{\Fm}{\mathbb{F}_{q^m}}

\newcommand{\Ll}{\mathfrak{L}}
\newcommand{\N}{\mathfrak{N}}
\DeclareMathOperator{\Tr}{Tr}
\newcommand{\Oo}{\mathcal O}
\DeclareMathOperator{\Div}{Div}
\DeclareMathOperator{\Prin}{Prin}
\DeclareMathOperator{\Cl}{Cl}
\DeclareMathOperator{\Int}{Int}
\DeclareMathOperator{\supp}{supp}
\DeclareMathOperator{\Ord}{Ord}

\newcommand{\p}{\mathfrak{p}}
\newcommand{\aaa}{\mathfrak{a}}
\newcommand{\bb}{\mathfrak{b}}

\begin{document}

\title{An estimate for incomplete mixed character sums and applications}

\author[1]{Arpan Chandra Mazumder\fnref{fn1}}
\ead{arpan10@tezu.ernet.in}

\author[2]{Giorgos Kapetanakis\corref{cor1}}
\ead{kapetanakis@uth.gr}

\author[3]{Sushant Kala}
\ead{sushant@imsc.res.in}

\author[1]{Dhiren Kumar Basnet}
\ead{dbasnet@tezu.ernet.in}

\fntext[fn1]{The first author is supported by DST INSPIRE Fellowship, under grant no. DST/INSPIRE Fellowship/2021/IF210206.}
\cortext[cor1]{Corresponding author}

\affiliation[1]{organization={Department of Mathematical Sciences, Tezpur University}, city={Tezpur}, state={Assam}, postcode={784028}, country={India}}

\affiliation[2]{organization={Department of Mathematics, University of Thessaly}, addressline={3rd km Old National Road Lamia-Athens}, postcode={35100}, city={Lamia}, country={Greece}}

\affiliation[3]{organization={Department of Mathematics, Institute of Mathematical Sciences (HBNI)}, addressline={CIT Campus, IV Cross Road}, city={Chennai}, postcode={600113}, country={India}}

	\begin{abstract}
		Let $q$ be a prime power and $m>1$ be any integer. Let $\Fm$ be the finite field of order $q^m$ and $\theta\in\Fm$ be such that $\Fm = \F(\theta)$. We obtain a nontrivial bound for the mixed character sum $\sum_{x \in\F}\chi(\theta+x)\psi(x)$, where $\chi$ and $\psi$ are multiplicative and additive characters of $\Fm$ and $\F$, respectively, using function field methods. As an application of our main result, we prove that for fixed $m$ and sufficiently large prime powers $q$, that satisfy certain conditions, $\Fm/\F$ possesses the weak line property for primitive normal elements. 
		In particular, our result is a strengthening of existing results.
	\end{abstract}

\begin{keyword}
Character sums\sep Primitive elements\sep Normal elements\sep Line property\sep Translate Property\sep Trace
\MSC[2020]{11T30 \sep 11T23 \sep 12E20}
\end{keyword}
	
	\maketitle
	
	\section{Introduction}
	
	Throughout the article, let $p$ be a prime, $q$ be a power of $p$ and $m$ be a positive integer. Denote by $\F$, the finite field of order $q$ and characteristic $p$ and by $\Fm$, the extension field of $\F$ of degree $m$. The multiplicative group $\Fm^*$ is cyclic and a generator of this group is called a \emph{primitive} element of $\Fm$. An element $\theta \in \Fm$ is said to be \emph{normal} over $\F$ if the set of all its conjugates with respect to $\F$, that is, the set $\{\theta, \theta^{q}, \ldots , \theta^{q^{m-1}}\}$ forms a basis of $\Fm$ over $\F$. An element $\theta \in \Fm$ is said to be \emph{primitive normal} if it is both primitive and normal over $\F$.  
	
	Let $\Fm$ be an extension of degree $m$ over $\F$. An element $\theta\in\Fm$ is called a \emph{generator} of the extension $\Fm/\F$, if $\Fm = \F(\theta)$. For any generator $\theta$ of $\Fm/\F$, we call the set $\{\theta + x : x \in \F\}$, the \emph{set of translates} of $\theta$ over $\F$ (since geometrically the set can be interpreted as a translate of $\F$). For any particular extension $\Fm/\F$, we refer to the question of the existence of elements of $\Fm$ of a certain type and of the form $\theta + x$, where $x \in \F$, for every generator $\theta$ of $\Fm$ as the \emph{translate problem} and, if the answer is positive, we say that the extension $\Fm/\F$ possesses the \emph{translate property} ($TP$) for this type of elements. 
	
	Likewise, for any generator $\theta$ of $\Fm/\F$ and $\alpha \in \Fm^*$, we call the set $\{\alpha(\theta + x) : x \in \F\}$ the \emph{line} of $\theta$ and $\alpha$ over $\F$ (since this set can be interpreted as a line in $\Fm$). For any particular extension $\Fm/\F$, we refer to the question of the existence of elements of $\Fm$ of a certain type and of the form $\alpha(\theta + x)$, where $x \in \F$, for every $\alpha \in \Fm^*$ and for every generator $\theta$ of $\Fm$ as the \emph{line problem} and, if the answer is positive, we say that the extension $\Fm/\F$ possesses the \emph{line property} ($LP$) for this this type of elements. 

Finally, if in the above setting we only consider the case $\alpha\in\F^*$, we refer to the \emph{weak line problem} and we say that the extension $\Fm/\F$ possesses the \emph{weak line property} ($WLP$) for this type of elements, respectively. So, once we observe that $\alpha=1$ corresponds to the translate property and that the conditions $\alpha\in\Fm^m$, $\alpha\in\F^*$ and $\alpha=1$ are, in terms of strength, in ascending order, it is clear that
\[ LP \Rightarrow WLP \Rightarrow TP . \]

It is noteworthy that R\'ua~\cite{rua15,rua17} successfully linked the translate property for primitive elements of a finite field extension with the existence of primitive elements of \emph{semifields}, that is, a finite nonassociative division ring with multiplicative identity element, sparking additional interest in the translate property (and its generalizations) for finite field extensions.
	
	In 1937, Davenport~\cite{davenport1937} provided the bound \[ \left| \sum_{x \in \mathbb{F}_p}\chi(\theta +x)\right| \ll p^{1-\frac{1}{2(m+1)}} \] where $\chi$ is a non-trivial multiplicative character of $\F$, $m>1$ any integer and $\F=\mathbb{F}_{p^m}=\mathbb{F}_p(\theta)$. As a consequence established that extensions over (sufficiently large) prime fields possess the translate property for primitive elements.
	In 1953, Carlitz~\cite{carlitz1953} extended the above to arbitrary finite field extensions (not necessarily over prime fields). 
	The following is a classical result in the study of primitive elements.
	
	\begin{theorem}[Davenport-Carlitz]\label{devcar}
		Let $m>1$  be an integer. There exists some $TP(m)$ such that, for every prime power $q > TP(m)$, the extension $\Fm/\F$ possesses the translate property for primitive elements.
	\end{theorem}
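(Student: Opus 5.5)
We argue by counting: we show that the number $N(\theta)$ of $x\in\F$ with $\theta+x$ primitive in $\Fm$ is positive once $q$ is large in terms of $m$. First we use the standard characteristic function of primitive elements. Write $\Fm^*$ as cyclic of order $q^m-1$. Then for $y\in\Fm^*$,
\[ \rho(y) := \frac{\phi(q^m-1)}{q^m-1}\sum_{d\mid q^m-1}\frac{\mu(d)}{\phi(d)}\sum_{\operatorname{ord}\chi=d}\chi(y) \]
equals $1$ if $y$ is primitive and $0$ otherwise. Since $\theta\notin\F$ (as $m>1$), we have $\theta+x\neq0$ for every $x\in\F$. Summing $\rho(\theta+x)$ over $x\in\F$ and separating the trivial character ($d=1$) gives
\[ N(\theta)=\frac{\phi(q^m-1)}{q^m-1}\Big(q+\sum_{1<d\mid q^m-1}\frac{\mu(d)}{\phi(d)}\sum_{\operatorname{ord}\chi=d}\sum_{x\in\F}\chi(\theta+x)\Big). \]

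The key input is a uniform bound
\[ \Big|\sum_{x\in\F}\chi(\theta+x)\Big|\le (m-1)\sqrt q \]
for every nontrivial multiplicative character $\chi$ of $\Fm$ and every generator $\theta$. We prove this via the norm. Let $f(X)\in\F[X]$ be the minimal polynomial of $-\theta$, which is irreducible of degree $m$. Then $N_{\Fm/\F}(\theta+x)=\pm f(x)$ up to a fixed sign convention. The difficulty is that $\chi$ lives on $\Fm$, not $\F$. One route is to identify $\chi(\theta+x)$ with a character of the ideal $(X-\theta)$-adic residue field. Concretely, $\F[X]/(f)\cong\Fm$, and $x\mapsto\chi(\theta+x)$ is a Dirichlet character modulo $f$ evaluated at the linear polynomials $X+x$. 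By Weil's theorem, the Riemann hypothesis for the $L$-function $L(s,\chi)$ of a nontrivial Dirichlet character modulo $f$ over $\F[X]$, $L$ is a polynomial in $q^{-s}$ of degree at most $\deg f-1=m-1$ whose inverse roots have absolute value $\sqrt q$ or $1$. The coefficient of $q^{-s}$ is the sum of $\chi$ over monic polynomials of degree $1$, namely $\sum_{x}\chi(X+x)$. Comparing this coefficient with the elementary symmetric function of the inverse roots gives the bound $(m-1)\sqrt q$. I expect this identification and the use of the Weil bound to be the main step. Equivalently, one can cite the known Katz/Carlitz-type bound directly.

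Plugging the bound into the formula for $N(\theta)$ yields
\[ N(\theta)\ge\frac{\phi(q^m-1)}{q^m-1}\big(q-(m-1)\sqrt q\,(W(q^m-1)-1)\big), \]
where $W(n)$ is the number of squarefree divisors of $n$. Here we used that $\sum_{\operatorname{ord}\chi=d}1=\phi(d)$ and that only squarefree $d$ contribute. So $N(\theta)>0$ whenever $\sqrt q>(m-1)W(q^m-1)$. Finally, $W(n)=2^{\omega(n)}\ll_\epsilon n^\epsilon$, so $W(q^m-1)\le C_\epsilon q^{m\epsilon}$. Choosing $\epsilon=1/(4m)$ makes the right-hand side $O_m(q^{1/4})$, which is less than $\sqrt q$ for $q$ larger than some constant depending only on $m$. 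That constant is $TP(m)$, which completes the argument.
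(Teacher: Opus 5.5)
The paper gives no proof of this statement; it is quoted from Davenport and Carlitz. Your argument is correct, granting Weil's Riemann hypothesis for Dirichlet $L$-functions over $\F[X]$.

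Structurally it is the same sieve the paper runs in Lemma~\ref{lemma:N} and Theorem~\ref{lineprop_gen}, with the normality part removed. It uses the characteristic function of primitive elements, a bound $O_m(q^{1/2})$ for $\sum_{x\in\F}\chi(\theta+x)$ uniform in $\theta$ and in $\chi\neq\chi_0$, and $W(q^m-1)=o(q^{1/4})$ for fixed $m$.

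The difference is the source of the character-sum bound. You obtain Katz's bound (Theorem~\ref{katzbd}) by viewing $h\mapsto\chi(h(\theta))$ as a nontrivial Dirichlet character modulo the minimal polynomial of $\theta$. Since that modulus is irreducible, the character is automatically primitive. You then read $\sum_x\chi(\theta+x)$ off as the linear coefficient of its $L$-polynomial, which has degree at most $m-1$ and inverse roots of absolute value $1$ or $q^{1/2}$. The paper instead gets $mq^{1/2}$ (Theorem~\ref{mainthm} with $\psi$ trivial) from the bound~\eqref{eq:perel} for a nonsingular character modulo $\p_g\p_\infty^2$. There, the square at $\p_\infty$ is needed only to make the additive twist $\psi(-S_\aaa)$ well defined on the ray. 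Your route is more direct for purely multiplicative sums and gives the better constant $m-1$. The paper's route is built for the mixed sums that normality requires. For the translate property either constant suffices.

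One cosmetic slip: with $f$ the minimal polynomial of $-\theta$, the identification $\F[X]/(f)\cong\Fm$ sends $X\mapsto-\theta$. So your character evaluated at $X+x$ is $\chi(x-\theta)$ rather than $\chi(\theta+x)$. This is harmless, since $\sum_x\chi(x-\theta)=\chi(-1)\sum_x\chi(\theta+x)$. It is cleaner, though, to use the minimal polynomial of $\theta$ with $X\mapsto\theta$, and then the remark about the norm is unnecessary.
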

	
	In 1989, Katz~\cite{katz} provided the following estimate for a class of character sums that occur as eigenvalues of adjacency matrices of certain graphs constructed by Chung~\cite{chung}. 
	
	\begin{theorem}[Katz] \label{katzbd}
		Let $\chi$ be any nontrivial multiplicative character of $\Fm^*$ and $\theta$ in $\Fm$, a generator $\Fm/\F$. Then 
\[
			\left|\sum_{x \in \F}\chi(\theta +x)\right|\leq (m-1)q^{1/2} .
		\]
	\end{theorem}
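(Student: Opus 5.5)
The idea is to rewrite the sum over $\F$ as a sum of a multiplicative character of $\F$ evaluated at the minimal polynomial of $-\theta$. Then the Weil bound for multiplicative character sums with polynomial arguments finishes the job.

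\textbf{Reduction to a character of $\F$ via the norm.} Let $F(X)\in\F[X]$ be the minimal polynomial of $\theta$ over $\F$. Since $\theta$ is a generator, $F$ is irreducible of degree exactly $m$, and
\[ F(X)=\prod_{i=0}^{m-1}\bigl(X-\theta^{q^i}\bigr). \]
For $x\in\F$ we have $N_{\Fm/\F}(\theta+x)=\prod_i(\theta^{q^i}+x)=(-1)^mF(-x)$. Because $\Fm^*$ is cyclic and the norm map is surjective, one would like to write $\chi=\eta\circ N_{\Fm/\F}$ for a character $\eta$ of $\F^*$. This is not possible for every $\chi$, since $\chi$ need not be trivial on the kernel of the norm. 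So I would take the function-field route instead.

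\textbf{Main approach.} Let $\p$ be the place of $\F(t)$ corresponding to $F(-t)$. Its residue field is $\F[t]/(F(-t))\cong\Fm$, with $t\mapsto -\theta$. The character $\chi$ then defines a Dirichlet (Kummer) character modulo $\p$: set $\lambda(g)=\chi(g(-\theta))$ for $g\in\F[t]$ coprime to $F(-t)$. This is a character of $(\F[t]/\p)^*$, nontrivial because $\chi$ is, with conductor $\p$ of degree $m$. Our sum is then $\sum_{x\in\F}\lambda(t+x)$, a sum over monic degree-one polynomials.

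\textbf{Applying the Riemann hypothesis.} The $L$-function $L(u,\lambda)=\sum_{g \text{ monic}}\lambda(g)u^{\deg g}$ is a polynomial of degree at most $\deg\p-1=m-1$, because the coefficients of $u^d$ vanish for $d\ge m$ by orthogonality over residues mod $\p$. By Weil's Riemann hypothesis for the associated Kummer cover, its inverse roots $\omega_j$ satisfy $|\omega_j|\le q^{1/2}$. Comparing coefficients of $u^1$ gives
\[ \sum_{x\in\F}\lambda(t+x)=-\sum_j\omega_j, \]
hence the sum has absolute value at most $(m-1)q^{1/2}$. One must also check that the character is even, or alternatively absorb the place at infinity. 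Adjusting for infinity may remove one root or contribute a root of absolute value $1$, but in either case the degree bound $m-1$ persists.

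\textbf{Main obstacle.} The hard step is the Riemann hypothesis input. Specifically, one needs that $L(u,\lambda)$ has exact degree $\le m-1$ with all inverse roots of absolute value at most $\sqrt q$. That requires identifying it with a factor of the zeta function of the cyclic cover $y^{d}=\text{(norm form)}$ ramified only at $\p$ (and possibly $\infty$), and controlling the contribution at infinity carefully. Everything else is bookkeeping.
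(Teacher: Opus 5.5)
The paper does not prove this statement. It is quoted from Katz~\cite{katz}, and the nearest argument in the paper, the proof of Theorem~\ref{mainthm}, only gives $mq^{1/2}$ when $\psi$ is trivial.

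Your argument is correct and is a standard way to obtain Katz's bound. The orthogonality count showing that $L(u,\lambda)$ is a polynomial of degree at most $m-1$ is elementary. The only deep input is Weil's Riemann hypothesis for Dirichlet $L$-functions of $\F[t]$. The issue at infinity resolves as you guessed:
\begin{itemize}
\item If $\chi|_{\F^*}$ is nontrivial, all $m-1$ inverse roots have modulus $q^{1/2}$.
\item If $\chi|_{\F^*}$ is trivial, then $(q-1)L(1,\lambda)=\sum_{r\not\equiv 0}\lambda(r)=0$. Hence $L(u,\lambda)=(1-u)P(u)$ with $\deg P=m-2$ and inverse roots of $P$ of modulus $q^{1/2}$, so the coefficient of $u$ is at most $1+(m-2)q^{1/2}\le (m-1)q^{1/2}$ in absolute value.
\end{itemize}

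Compared with the paper, this is the same mechanism with a smaller modulus at infinity. The paper uses the character $\aaa\mapsto\chi(f_\aaa(\theta))\psi(-S_\aaa)$ modulo $I=\p_g\p_\infty^2$. The square at $\p_\infty$ is needed because $\psi$ is encoded through the sum of roots, i.e.\ the second coefficient of the expansion in $1/x$. Then \eqref{eq:perel} gives $(\deg I-2)q^{1/2}=mq^{1/2}$. In that language your $\lambda$ corresponds to $\aaa\mapsto\chi(f_\aaa(\theta))$ (with $g=F$). This is already a character modulo $\p_g\p_\infty$, of degree $m+1$, and the same bound gives exactly $(m-1)q^{1/2}$. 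So your route recovers Katz's constant and shows where $m-1$ comes from. The paper's route buys the additive twist at the cost of one in the constant.

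Two corrections. First, with $\p$ attached to $F(-t)$ you get $\lambda(t+x)=\chi(x-\theta)$, so your sum is really $\chi(-1)\sum_{x\in\F}\chi(\theta+x)$. This is harmless in absolute value, but using $F(t)$ and $\lambda(h)=\chi(h(\theta))$ is cleaner.

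Second, and more substantively, your account of where the Riemann hypothesis comes from is not right in general. A Kummer cover $y^d=F(t)$ of the $t$-line, with the norm form as radicand, only sees characters $\chi=\eta\circ N_{\Fm/\F}$, i.e.\ those of order dividing $q-1$. That is exactly the case you had already set aside. For general $\chi$ the relevant cyclic cover is a subfield of the Carlitz cyclotomic function field attached to $F$ (ramified only at $\p$ and $\infty$), not a Kummer extension of $\F(t)$. Close this step by citing Weil's theorem for Dirichlet $L$-functions in that form, or \eqref{eq:perel} with $I=\p_g\p_\infty$.
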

	
	
	Furthermore, by using this result, a natural generalization of Theorem~\ref{devcar} was established by Cohen~\cite{cohen2010} in 2010.
	
	\begin{theorem}[Cohen]\label{cohen1}
		Let $m>1$  be an integer. There exists some $LP(m)$ such that, for every prime power $q > LP(m)$, the extension $\Fm/\F$ possesses the line property for primitive elements.
	\end{theorem}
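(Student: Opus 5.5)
The plan is to show that for fixed $\theta$ and $\alpha$ some $x\in\F$ makes $\alpha(\theta+x)$ primitive. We count such $x$ with Vinogradov's characteristic function for primitive elements and bound the error terms using Theorem~\ref{katzbd}. Fix $\alpha\in\Fm^*$ and a generator $\theta$ of $\Fm/\F$, and write $Q=q^m-1$. For $y\in\Fm^*$ we use the standard identity
\[
\omega(y)=\frac{\phi(Q)}{Q}\sum_{d\mid Q}\frac{\mu(d)}{\phi(d)}\sum_{\chi\in\widehat{\Fm^*},\ \Ord\chi=d}\chi(y),
\]
which equals $1$ if $y$ is primitive and $0$ otherwise. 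Set $N=\sum_{x\in\F}\omega(\alpha(\theta+x))$. Since $\theta\notin\F$ (as $m>1$), every $\theta+x$ is nonzero, so each term is defined.

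Next we separate the trivial character ($d=1$) from the rest:
\[
N=\frac{\phi(Q)}{Q}\Bigl(q+\sum_{1<d\mid Q}\frac{\mu(d)}{\phi(d)}\sum_{\Ord\chi=d}\chi(\alpha)\sum_{x\in\F}\chi(\theta+x)\Bigr).
\]
Here $\chi(\alpha)$ has modulus one and factors out, which is why $\alpha$ costs nothing. By Theorem~\ref{katzbd} each inner sum over $x$ has modulus at most $(m-1)q^{1/2}$. Only squarefree $d$ contribute, and for each such $d$ there are $\phi(d)$ characters of order $d$. So the error term has modulus at most $(m-1)q^{1/2}(2^{\omega(Q)}-1)$, where $\omega(Q)$ is the number of distinct prime divisors of $Q$. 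Hence $N>0$ whenever
\[
q^{1/2}>(m-1)\,2^{\omega(q^m-1)}.
\]

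The remaining step, and the only real obstacle, is to show this holds for all large $q$ with $m$ fixed. We use the classical divisor-type bound: for every $\varepsilon>0$ there is $C_\varepsilon$ with $2^{\omega(n)}\le C_\varepsilon n^{\varepsilon}$. Taking $\varepsilon=1/(4m)$ gives $2^{\omega(q^m-1)}\le C\,q^{1/4}$. The condition then reduces to $q^{1/4}>(m-1)C$, which holds for all $q$ beyond some explicit $LP(m)$. Because the bound is uniform in $\alpha$ and $\theta$, this proves the line property.

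Two side remarks. If sharper thresholds were wanted, we would refine the sieve (Cohen–Huczynska style) by restricting to divisors of a core of $Q$. That refinement is not needed for mere existence of $LP(m)$. The argument also recovers Theorem~\ref{devcar} as the special case $\alpha=1$.
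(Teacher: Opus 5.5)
Your argument is correct and is exactly the route the paper indicates for this cited result. You feed Katz's bound (Theorem~\ref{katzbd}) into the Vinogradov characteristic function, so the $d=1$ term gives $q$, the remaining terms are bounded by $(2^{\omega(q^m-1)}-1)(m-1)q^{1/2}$, and the divisor bound $W(q^m-1)=o(q^{1/4})$ finishes it; this is the same template the paper uses in Lemma~\ref{lemma:N} and Theorem~\ref{lineprop_gen}, without the additive characters (one minor quibble: calling $LP(m)$ ``explicit'' overstates things unless you make $C_\varepsilon$ explicit).
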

	
Recently, Cohen and Kapetanakis~\cite{cohen2021} extended Theorems~\ref{devcar} and \ref{cohen1} to \emph{$r$-primitive} elements, that is, elements of $\Fm$ of multiplicative order $(q^m-1)/r$, where necessarily $r\mid q^m-1$.
	
	\begin{theorem}[Cohen-Kapetanakis]
		Let  $m>1$ and $r$ be integers. There exists some $LP_r(m)$ such that, for every prime power $q > LP_r(m)$ with the property $r \mid q^m-1$, the extension $\Fm/\F$ possesses the line property for $r$-primitive elements. As for the translate property for $r$-primitive elements, the same is true for some $TP_r(m) \leq LP_r(m)$.
	\end{theorem}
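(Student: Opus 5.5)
Fix $\alpha\in\Fm^*$ and a generator $\theta$ of $\Fm/\F$, and let $d=(q^m-1)/r$. The plan is to show that the number $N$ of $x\in\F$ for which $\alpha(\theta+x)$ is $r$-primitive is positive once $q$ is large in terms of $m$ and $r$. An element $y\in\Fm^*$ is $r$-primitive exactly when $y=z^r$ with $z$ of order $d$ in the group of $r$-th powers. Equivalently: $y$ is an $r$-th power, and $y^{1/r}$ (any choice) is not an $\ell$-th power inside the cyclic group of $r$-th powers, for every prime $\ell\mid d$.

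We encode this with the usual Vinogradov-type characteristic function. Writing $\Lambda$ for the characters of $\Fm^*$, the indicator of $r$-primitivity is
\[
\frac{\varphi(d)}{q^m-1}\sum_{e\mid d}\frac{\mu(e)}{\varphi(e)}\sum_{\chi\in\Lambda,\ \Ord(\chi)=re'} \chi(y),
\]
where the order bookkeeping picks out characters whose order divides $r\cdot\mathrm{rad}(d)$-type quantities. Concretely, one sums over characters $\chi$ of order dividing $r e$ with the appropriate M\"obius weights. The cleanest route is to follow Cohen--Kapetanakis: count $x$ such that $\alpha(\theta+x)=\beta\, z$ ranges over the coset structure. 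This reduces to sums of the form $\sum_{x\in\F}\chi(\alpha)\chi(\theta+x)$ with $\chi$ running over characters of order dividing $r\cdot \mathrm{rad}(d)$.

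The main term comes from the trivial character and has size of order $\frac{\varphi(d)}{q^m-1}\cdot\frac{q}{?}$, up to a constant depending on $r$. Every nontrivial $\chi$ contributes at most $(m-1)q^{1/2}$ by Theorem~\ref{katzbd}; the factor $|\chi(\alpha)|=1$ is harmless, which is why the line property comes for free. The count of characters involved is at most $r\cdot 2^{\omega(d)}$ after M\"obius weighting. We therefore get
\[
N\ \ge\ \frac{\varphi(d)}{d}\cdot\frac{1}{r}\Bigl(q-(m-1)\,r\,2^{\omega(d)}q^{1/2}\Bigr)
\]
up to constants, so $N>0$ whenever $q^{1/2}> (m-1)\,r\,W(d)$, with $W(d)=2^{\omega(d)}$.

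The main obstacle is bounding $W(d)\le W(q^m-1)$ by $C_{\varepsilon}\,q^{m\varepsilon}$, using $\omega(n)=O(\log n/\log\log n)$. Taking $\varepsilon<1/(2m)$ makes the condition $q^{1/2}>(m-1)\,r\,C_\varepsilon\, q^{m\varepsilon}$ hold for all sufficiently large $q$, which yields $LP_r(m)$. The translate statement is the case $\alpha=1$, so $TP_r(m)\le LP_r(m)$ follows immediately. Care is needed to verify that the character decomposition yields exactly this main term; in particular, one must check that the condition ``$y$ is an $r$-th power'' splits off correctly when $\gcd(r,d)>1$. I would handle this by working in the subgroup of $r$-th powers and lifting characters from there.
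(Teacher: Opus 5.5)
The paper gives no proof of this statement; it is quoted from \cite{cohen2021}, so there is no in-paper argument to compare against. Your strategy is the standard one, and it is the same template the paper uses in Lemma~\ref{lemma:N} and Theorem~\ref{lineprop_gen}. You expand an indicator in multiplicative characters and bound each nontrivial sum by Theorem~\ref{katzbd}; the factor $\chi(\alpha)$ has modulus one, and $\theta+x\neq 0$ since $\theta\notin\F$. You then absorb $W(q^m-1)=O_\varepsilon(q^{m\varepsilon})$. Your final inequality and the choice $\varepsilon<1/(2m)$ are right, and the bound on $W$ is routine rather than the main obstacle. The real work is the character decomposition, which you left vague, and as written it is wrong in two places.

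\emph{First, the opening characterization fails.} Let $R=(q^m-1)/r$ and let $H$ be the subgroup of $r$-th powers, of order $R$. If $z$ has order $R$ in $H$, then $z^r$ has order $R/\gcd(R,r)$, not $R$. Also, ``$y^{1/r}$ is not an $\ell$-th power in $H$'' does not make sense, since $y^{1/r}$ need not lie in $H$. The correct statement is: $y$ is $r$-primitive iff $y\in H$ and $y$ generates $H$.

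\emph{Second, the index set is neither ``characters of order $re'$'' nor ``of order dividing $re$''.} Since $\chi(z^r)=\chi^r(z)$, the characters of $\Fm^*$ whose restriction to $H$ has order $e$ are exactly those with $\operatorname{ord}(\chi^r)=e$, and there are $r\phi(e)$ of them. Take the generator indicator of the cyclic group $H$ (the analogue of \eqref{e-free ch}), with each $\lambda\in\widehat{H}$ replaced by an extension $\tilde\lambda$. Multiply it by $\mathbf{1}_H(y)=\frac1r\sum_{\chi^r=1}\chi(y)$. This turns each $\tilde\lambda$ into the average of the $r$ extensions of $\lambda$, giving
\[
\omega_r(y)=\frac{\phi(R)}{q^m-1}\sum_{e\mid R}\frac{\mu(e)}{\phi(e)}\sum_{\chi:\ \operatorname{ord}(\chi^r)=e}\chi(y),\qquad y\in\Fm^*,
\]
with no case distinction on $\gcd(r,R)$ needed.

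The $e=1$ block contains all $r$ characters with $\chi^r=1$. Only the trivial one gives the main term $q$, which resolves your ``$q/?$''; the other $r-1$ are error terms. Hence
\[
N\ge\frac{\phi(R)}{q^m-1}\Bigl(q-(m-1)\bigl(rW(R)-1\bigr)q^{1/2}\Bigr).
\]
This is your display with no hidden constants, and with this formula in place the rest of your argument goes through.
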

	
	Motivated by this series of outstanding results on the aforementioned line and translate properties for primitive elements over finite fields  	 and the estimates for character sums that served as the backbone behind these results, we consider the line and translate properties for primitive normal elements. 
	Towards this end, observe that the results of Davenport~\cite{davenport1937} and Katz~\cite{katz} involve only a multiplicative character. However, since normality is a property of the additive structure of $\Fm$, we will encounter character sum estimates that are mixed, in the sense that an additive character will also be present.
	
	In 1990, Perel{'}muter and Shparlinski~\cite{perel2} showed that, if $\theta$ is a generator of the extension $\mathbb F_{p^m}/\mathbb F_p$ and $a\in\mathbb F_{p}$, then
	\begin{equation}\label{perelbd}
		\left|\sum_{t \in \mathbb F_p} \chi(\theta+t) \exp (2 \pi i a t / p)\right| \leq m p^{1 / 2},
	\end{equation}
	where $\chi$ is a nontrivial multiplicative character of $\mathbb{F}_{p^m}$. Note that the aforementioned result applies to finite field extensions over prime fields and, although $\psi(t) = \exp (2 \pi i a t / p)$ is an additive character, it is not an arbitrary one.
	
	In 2014, Fu and Wan~\cite{fuwan14} obtained the following.
	\begin{theorem}[Fu-Wan] \label{thm:fuwan}
	  Take $f,g\in\Fm(x)$ written in their simplest form. Further, set $D_1 = \deg(f)$, $D_2 = \max (\deg(g),0)$, $D_3$ the degree of the denominator of $g$, and $D_4$ the degree of the part of the denominator of $g$ that is relatively prime to both the enumerator and the denominator of $f$. Let $\chi$ be a multiplicative character of $\Fm$ and $\psi$ a nontrivial additive character of $\F$. If $\Tr_{q^m/q}(g)$ is not of the form $r^q-r$, for some $r\in\overline{\F}(x)$, then
	  \[ \left| \sum_{t\in\F, f(t)\neq 0,\infty, g(t)\neq\infty} \chi(f(t)) \psi(\Tr_{q^m/q}(g(t))) \right| \leq (m(D_1+D_3+D_4)+D_2-1)q^{1/2} . \]
	\end{theorem}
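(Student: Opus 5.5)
The statement is a cited result of Fu and Wan, so the plan is to reconstruct the standard $\ell$-adic argument rather than derive it from the elementary bounds stated earlier (Theorem~\ref{katzbd} and \eqref{perelbd} are special cases of it). The overall route is: realize the summand $\chi(f(t))\psi(\Tr_{q^m/q}(g(t)))$, for $t\in\F$, as the trace of Frobenius of a rank-one lisse sheaf on an open subset of $\mathbb{A}^1_{\F}$. Then apply the Grothendieck--Lefschetz trace formula and Deligne's weight bounds. Finally, control the dimension of $H^1_c$ by an Euler characteristic computation via Grothendieck--Ogg--Shafarevich.

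\textbf{Step 1: the sheaf.} Let $\sigma$ denote the $q$-Frobenius acting on coefficients, and write $f^{\sigma^i}$, $g^{\sigma^i}$ for the conjugated rational functions. For $t\in\F$ we have $f(t)^{q^i}=f^{\sigma^i}(t)$. Hence $\chi(f(t))=\chi_0(N_{q^m/q}$-type product$)$ cannot simply be pulled down, but the sheaf $\mathcal{L}_\chi(f)\otimes\mathcal{L}_{\psi\circ\Tr}(g)$ on $U\subset\mathbb{A}^1_{\Fm}$ has an $\F$-descent $\mathcal{F}$ on $U_0=\mathbb{A}^1_{\F}$ minus the zeros and poles of $f$ and the poles of $g$, together with all their conjugates. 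This descent can be obtained via restriction of scalars: pull back $\mathcal{L}_\chi\otimes\mathcal{L}_\psi$ along the morphism $U_0\to R_{\Fm/\F}\mathbb{G}_m\times\mathbb{G}_a$ given by $t\mapsto (f(t),\Tr(g(t)))$. By construction, its Frobenius trace at $t\in U_0(\F)$ is exactly the summand. The sum then equals $\Tr(\mathrm{Frob}\mid H^2_c)-\Tr(\mathrm{Frob}\mid H^1_c)$, because $H^0_c=0$ on an affine curve.

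\textbf{Step 2: vanishing of $H^2_c$.} We have $H^2_c(U_0\otimes\overline{\F},\mathcal{F})=0$ as soon as $\mathcal{F}$ is geometrically nontrivial. Geometrically, $\mathcal{F}$ becomes $\bigotimes_i\mathcal{L}_\chi(f^{\sigma^i})\otimes\mathcal{L}_\psi(\Tr g)$. The Kummer part is tame, while $\mathcal{L}_\psi(\Tr g)$ is geometrically trivial exactly when $\Tr_{q^m/q}(g)=r^q-r$ by Artin--Schreier theory. The hypothesis therefore makes the Artin--Schreier factor nontrivial, and in fact wildly ramified at some point (after removing the $r^q-r$ part, which is also how the precise degree bookkeeping enters). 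A tame sheaf cannot cancel it, so $\mathcal{F}$ is nontrivial.

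\textbf{Step 3: counting $\dim H^1_c$.} This is where the constant $m(D_1+D_3+D_4)+D_2-1$ comes from, and it is the step I expect to be the main obstacle. By Deligne, $H^1_c$ is mixed of weights $\le 1$, so the sum is at most $\dim H^1_c\cdot q^{1/2}=-\chi_c(U_0,\mathcal{F})\,q^{1/2}$. Grothendieck--Ogg--Shafarevich gives
\[
-\chi_c=-\chi_c(U_0)+\sum_{x\notin U_0}\mathrm{Sw}_x(\mathcal{F}),
\]
where $-\chi_c(U_0)=\#(\mathbb{P}^1\setminus U_0)-2$ counted over $\overline{\F}$. I will bound the contributions as follows:
\begin{itemize}[label={}]
\item The zeros and poles of the $m$ conjugates of $f$ contribute at most $mD_1$ points.
\item The poles of the conjugates of $g$ contribute at most $mD_3$ points.
\item Each finite pole of $g^{\sigma^i}$ of order $e$ contributes Swan conductor at most $e$; summing over conjugates gives another $\le mD_3$.
\item The point $\infty$ contributes $1$ point plus Swan conductor at most $D_2$.
\end{itemize}
The delicate point is replacing one $mD_3$ by $mD_4$. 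At poles of $g$ that are also zeros or poles of $f$, the point is already counted in $mD_1$, so only the Swan term is new. Making this sharp requires tracking when the Artin--Schreier conductor can be reduced by subtracting some $r^q-r$. If that is too subtle, I would first prove the slightly weaker constant with $D_3$ in place of $D_4$. I would then refine by separating the pole set of $g$ into the part coprime to the numerator and denominator of $f$ and its complement, and combining the resulting tallies with the $-2$ from $\mathbb{P}^1$.
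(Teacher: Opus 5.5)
\textbf{Overall: the approach is the right one, but Step~2 rests on a false Artin--Schreier criterion, and the statement as written here actually fails when $q\neq p$.}

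The paper gives no proof of this statement. It quotes it from Fu and Wan and only remarks that their proof is $\ell$-adic. So your route (Weil restriction to realize the summand as a Frobenius trace, then Grothendieck--Lefschetz, Deligne and Grothendieck--Ogg--Shafarevich) is the expected one, and Step~1 is fine.

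\textbf{The gap in Step~2.} It is \emph{not} true that $\mathcal{L}_\psi(h)$, for $h\in\F(x)$, is geometrically trivial only when $h=r^q-r$. Nontriviality of $y^q-y=h$ over $\overline{\F}(x)$ only says that its geometric Galois group $A\subseteq\F$ is nonzero. But $\mathcal{L}_\psi(h)$ is already geometrically trivial whenever $\psi|_A=1$. Precisely, write $\psi(a)=\psi_1(ba)$ with $\psi_1$ the canonical character of $\F$. Then $\mathcal{L}_\psi(h)$ is the Artin--Schreier sheaf attached to $y^p-y=bh$ (via $a\mapsto e^{2\pi i a/p}$ on $\mathbb{F}_p$). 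It is geometrically trivial if and only if $bh=r^p-r$ for some $r\in\overline{\F}(x)$. For $q\neq p$ this is strictly weaker than $h=r^q-r$. So under the stated hypothesis $H^2_c$ need not vanish, and your identification of the sum with $-\Tr(\mathrm{Frob}\mid H^1_c)$ breaks down.

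\textbf{A counterexample to the statement.} No argument can repair this without changing the hypothesis, because the statement as written here is false whenever $q=p^e$ with $e\geq 2$. Take $f=1$, $\psi=\psi_1$, and $g=c(x^p-x)$ with $\Tr_{q^m/q}(c)=1$, so that $\Tr_{q^m/q}(g)=x^p-x$.
\begin{itemize}
\item This is not of the form $r^q-r$. Such an $r$ would have no finite poles, hence would be a polynomial, and then $r^q-r$ is either constant or of degree at least $q>p$.
\item Yet $\Tr_{q/p}(t^p-t)=0$ for every $t\in\F$, so every summand equals $1$ and the sum is $q$.
\item The claimed bound is $(D_2-1)q^{1/2}=(p-1)q^{1/2}<q$.
\end{itemize}

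\textbf{What your argument does prove.} It proves the theorem under the hypothesis that $b\,\Tr_{q^m/q}(g)$ is not of the form $r^p-r$, equivalently that $\mathcal{L}_\psi(\Tr_{q^m/q}(g))$ is geometrically nontrivial. This agrees with the stated hypothesis when $q=p$. It also holds in the paper's application, where one can take $\Tr_{q^m/q}(g)=x$. Under that hypothesis your tameness argument in Step~2 is correct.

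\textbf{On Step~3.} Replacing $mD_3$ by $mD_4$ needs no Artin--Schreier reduction; just count points and Swan conductors separately.
\begin{itemize}
\item The finite poles of conjugates of $g$ that are not already zeros or poles of conjugates of $f$ are $q^i$-th powers of roots of the part of the denominator of $g$ coprime to $f$. There are at most $mD_4$ of them.
\item The Swan conductors at finite points sum to at most $mD_3$, and the one at $\infty$ is at most $D_2$.
\end{itemize}
This gives $-\chi_c\leq (1+mD_1+mD_4)-2+mD_3+D_2$. Your count of at most $mD_1$ zeros and poles for the conjugates of $f$ requires reading $D_1$ as the degree of the numerator plus that of the denominator of $f$, not their maximum.
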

	As a consequence, one obtains, that, given $\theta$ a generator of $\Fm/\F$, $\chi$ a multiplicative character of $\Fm$ and $\psi$ a nontrivial additive character of $\F$, then
	\begin{equation} \label{eq:maineq}
	\left| \sum_{t\in\F} \chi(\theta+t)\psi(t) \right| \leq m q^{1/2} .
	\end{equation}
	However, it is worth pointing out that Fu and Wan~\cite{fuwan14}, in the proof of Theorem~\ref{thm:fuwan}, employ $\ell$-adic cohomology theory, whilst Perel{'}muter and Shparlinski~\cite{perel2}, in the proof of Eq.~\eqref{perelbd}, used simpler function field arguments.
	With all of the above in mind, the goal of this paper is first prove Eq.~\eqref{eq:maineq}, using simpler function field arguments (and also covering the case of $\chi$ being nontrivial and $\psi$ trivial), as follows.
		\begin{theorem}\label{mainthm}
		Let $\chi$ be a multiplicative character of $\Fm$ and $\psi$ an additive character of $\F$, respectively, such that not both of them are trivial. Further, let $\theta$ be a generator of the extension $\Fm/\F$. Then Eq.~\eqref{eq:maineq} holds.
	\end{theorem}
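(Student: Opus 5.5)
We reduce the sum to an estimate for a Dirichlet-type $L$-function over $\F[T]$ and bound its zeros via the Riemann hypothesis for curves (Weil). This is the function field route of Perel'muter and Shparlinski, with a general additive character.

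\textbf{Setup.} Let $f(T)\in\F[T]$ be the minimal polynomial of $\theta$, so $\deg f=m$ and $f$ is irreducible. Then $\F[T]/(f)\cong\Fm$ via $T\mapsto\theta$. Pulling back $\chi$ gives a Dirichlet character $\chi^*$ modulo $f$ on $\F[T]$, defined by $\chi^*(g)=\chi(g(\theta))$; it is trivial exactly when $\chi$ is. Note that
\[ \chi(\theta+x)=\chi((-1)^{\deg}\cdots)\,\chi^*(T+x)\quad\text{up to the sign convention}, \]
and more precisely $\chi^*(T+x)=\chi(\theta+x)$ directly. Write $\psi(x)=\psi_0(ax)$ for a fixed canonical character $\psi_0$ of $\F$.

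\textbf{The combined character.} For monic $g=T^n+c_1T^{n-1}+\cdots$, set $\lambda(g)=\psi(-c_1)$. This is the ``trace'' of the roots, so $\lambda$ is multiplicative on monic polynomials: it is a character of conductor $\infty^2$, i.e.\ it depends only on $g$ modulo $1/T^2$ at infinity. For $g=T+x$ we get $\lambda(g)=\psi(-x)$, so we apply the construction to $\bar\psi$. Hence $\Lambda=\chi^*\lambda$ is a character of the ray class group with modulus $f\cdot\infty^2$ (drop $\infty^2$ if $\psi$ is trivial).

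\textbf{The $L$-function.} Consider
\[ L(u,\Lambda)=\sum_{g\ \text{monic}}\Lambda(g)u^{\deg g}=\prod_{P}(1-\Lambda(P)u^{\deg P})^{-1}. \]
Because $\Lambda$ is nontrivial (at least one of $\chi,\psi$ is), the coefficient sums vanish for $n\ge \deg(\text{modulus})$. So $L$ is a polynomial of degree at most $m+1$ (at most $m-1$ when $\psi$ is trivial). The linear coefficient is exactly
\[ \sum_{x}\Lambda(T+x)=\sum_{x\in\F}\chi(\theta+x)\psi(x) \]
(with the conjugation convention fixed above).

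\textbf{Removing the trivial zero.} The character is wild at $\infty$ when $\psi\ne1$. If $\chi$ is trivial on $\F^*$, i.e.\ the character is ``even'' at infinity, there may be a trivial factor $(1-u)$ corresponding to the degree-$1$ place at infinity. In every case
\[ L(u,\Lambda)=(1-u)^{\epsilon}\prod_{i=1}^{N}(1-\omega_iu),\qquad |\omega_i|=q^{1/2},\quad N+\epsilon\le m+1 . \]
Here the purity of the $\omega_i$ comes from Weil's theorem applied to the associated abelian cover of $\mathbb P^1$: the $L$-function of a primitive ray class character is the numerator factor of the zeta function, a standard fact in Rosen's book. The degree count follows from Riemann--Hurwitz or the conductor-discriminant formula: $\deg L^{\text{prim}}=\deg(\text{conductor})-2$, and the conductor has degree at most $m+2$. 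Hence the number of $q^{1/2}$-zeros is at most $m$, or $m-1$ when $\psi$ is trivial, which recovers Theorem~\ref{katzbd}.

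\textbf{Conclusion.} Comparing coefficients of $u$ gives
\[ \Bigl|\sum_x\chi(\theta+x)\psi(x)\Bigr|=\Bigl|\epsilon+\sum\omega_i\Bigr| . \]
Combining the $\epsilon$ term with the count of $\omega_i$ shows the right side is at most $mq^{1/2}$. When $\epsilon=1$ the full conductor drops, since the infinite part is tame or unramified, so $N\le m-1$ and $1+(m-1)q^{1/2}\le mq^{1/2}$.

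\textbf{Main obstacle.} The main difficulty is the careful bookkeeping at the place at infinity: determining the conductor exponent of $\Lambda$ at $\infty$ in each case ($\psi$ trivial or not; $\chi|_{\F^*}$ trivial or not) and confirming primitivity. This is what justifies both the degree bound and the purity of all the remaining zeros. I would handle it by explicitly identifying the ray class group of $\F[T]$ modulo $f\infty^2$ as
\[ (\F[T]/f)^*\times\bigl(1+T^{-1}\F[[T^{-1}]]\bigr)/\bigl(1+T^{-2}\F[[T^{-1}]]\bigr), \]
modulo $\F^*$ embedded diagonally.
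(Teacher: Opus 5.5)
Your proposal is correct and is essentially the paper's argument: your $\Lambda=\chi^*\lambda$ modulo $f\cdot\infty^2$ is the paper's character $X$ modulo $I=\p_g\,\p_\infty^2$ (up to conjugating $\psi$). Where the paper simply cites the bound~\eqref{eq:perel} for nonsingular characters, you unpack it into three steps: polynomiality of $L(u,\Lambda)$, Weil's Riemann hypothesis, and the conductor-degree count. As a bonus this recovers Katz's $(m-1)q^{1/2}$ when $\psi$ is trivial.

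One bookkeeping fix is needed when $\chi$ is trivial. Then $\Lambda$ is imprimitive modulo $f\cdot\infty^2$, with conductor $\infty^2$. With the usual convention that $\chi^*$ vanishes on multiples of $f$, the missing Euler factor at $f$ contributes roots of modulus $1$. You should therefore assert $|\omega_i|\le q^{1/2}$ rather than equality, or simply dispose of this case via $\sum_{x\in\F}\psi(x)=0$.
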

	At this point we stress that the case $\psi$ being trivial and $\chi$ nontrivial is covered by Theorem~\ref{katzbd}. Our approach essentially generalizes this result, whilst  being simpler and more elementary.
	Furthermore, it is noteworthy that Theorem~\ref{mainthm} was recently used in \cite{garefalakiskapetanakis26} in the latest attack on the Morgan-Mullen conjecture \cite{morganmullen}.
	 
 Then, we use Theorem~\ref{mainthm} in order to explore whether the extension $\Fm/\F$ possesses the line property for primitive normal elements. Namely, we obtain the following.
 
 \begin{theorem} \label{lineprop}
Fix a prime $m$. There exists an integer $WLPN(m)$,
such that for every prime power $q\geq WLPN(m)$ that is primitive modulo $m$, the extension $\Fm/\F$ possesses the weak line property for primitive normal elements. The same is true for some $TPN(m)\leq WLPN(m)$ regarding the translate property for primitive normal elements.
\end{theorem}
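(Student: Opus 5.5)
We follow the standard character-sum approach. Fix a prime $m$, a prime power $q$ that is primitive modulo $m$, a generator $\theta$ of $\Fm/\F$ and $\alpha\in\F^*$. We count
\[ N=\#\{x\in\F : \alpha(\theta+x) \text{ is primitive and normal over } \F\} \]
and aim to show $N>0$ for large $q$.

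The first step is to understand normality under the hypothesis on $q$. Normality of $\beta\in\Fm$ is governed by the $\F[X]$-order of $\beta$ under the Frobenius action, which divides $X^m-1$. If $p\neq m$ and $q$ is primitive modulo $m$, the factorization is $X^m-1=(X-1)\Phi_m(X)$ with $\Phi_m$ irreducible over $\F$. If $p=m$, then $X^m-1=(X-1)^m$. In the first case $\beta$ is normal if and only if
\[ \Tr_{q^m/q}(\beta)\neq 0 \quad\text{and}\quad L_{\Phi_m}(\beta)\neq 0, \]
where $L_{\Phi_m}(\beta)$ is the linearized polynomial action. The second condition fails only on the kernel of $L_{\Phi_m}$, which is the $1$-dimensional $\F$-subspace $\F$. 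Since $\theta\notin\F$, we have $\alpha(\theta+x)\notin\F$ for every $x$, so that condition holds automatically on the line. Hence normality reduces to $\Tr(\alpha(\theta+x))=\alpha(\Tr\theta+mx)\neq0$. When $p\nmid m$ this excludes exactly one value of $x$. When $p=m$, normality is equivalent to $\beta\notin\ker L_{(X-1)^{m-1}}$, and one has to check separately that the trace condition is again a linear condition in $x$. If $\Tr(\theta)=0$ it may fail for all $x$; that degenerate case has to be handled or excluded, and I would check carefully whether the hypotheses rule it out. This reduction is what should make only $\chi$ and a single linear form in $x$ appear, so that Theorem~\ref{mainthm} suffices.

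Next, I detect the conditions with characters. Primitivity uses Vinogradov's formula,
\[ \omega(\beta)=\theta(Q)\sum_{d\mid Q}\frac{\mu(d)}{\phi(d)}\sum_{\chi_d}\chi_d(\beta), \qquad Q=q^m-1,\ \theta(Q)=\phi(Q)/Q. \]
The trace condition uses
\[ 1-\frac1q\sum_{\psi}\psi\bigl(\Tr(\alpha(\theta+x))\bigr), \]
where $\psi$ runs over the additive characters of $\F$. Expanding gives sums of the form
\[ \sum_{x\in\F}\chi_d(\alpha)\chi_d(\theta+x)\,\psi'(x)\,\psi(\alpha\Tr\theta), \]
where $\psi'(x)=\psi(m\alpha x)$ is again an additive character of $\F$. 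The term with $\chi_d$ and $\psi'$ both trivial is the main term, of size about $\theta(Q)(q-1)$. Every other term is bounded by $mq^{1/2}$ by Theorem~\ref{mainthm}; when $\chi_d$ is trivial and $\psi'$ is not, the sum is exactly $0$. Collecting the errors, I expect
\[ N\geq \theta(Q)\bigl(q-1-m\,q^{1/2}\,W(Q)\bigr), \]
possibly with an extra factor of about $2$, where $W(Q)$ is the number of squarefree divisors of $Q$. So $N>0$ whenever $q^{1/2}>m\,W(q^m-1)+O(1)$.

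Finally, I use $W(Q)\ll_\varepsilon Q^{\varepsilon}=q^{m\varepsilon}$; choosing $\varepsilon<1/(2m)$ gives the inequality for all $q\ge WLPN(m)$. Taking $\alpha=1$ gives $TPN(m)\le WLPN(m)$.

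The main obstacle is the normality reduction in the first step, especially the case $p=m$ and the possibility $\Tr(\theta)=0$, where the trace condition could exclude every translate. There I would first try to show that normality on the line reduces to a condition that is violated for at most one $x$. If that fails, I would instead detect normality through the full additive-character formula over $\F[X]$-divisors of $X^m-1$. The sums that arise are still of the shape $\sum_x\chi(\theta+x)\psi(\ell(x))$ with $\ell$ linear, so Theorem~\ref{mainthm} still applies. The cost is an extra factor $W(X^m-1)\le 2^m$ in the error term, which is harmless for fixed $m$.
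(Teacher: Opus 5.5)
Your argument is correct, and it follows a different route from the paper.

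\textbf{The paper's route.} The paper first proves the more general Theorem~\ref{lineprop_gen}. It applies to any $m$ and any line $\alpha\theta+\F$ with $\alpha\in\F^*$ and $\alpha\theta$ being $(x^m-1)/(x-1)$-free. The proof inserts the full characteristic function $\kappa_{x^m-1}$ of normal elements and splits the count into $S_1,S_2,S_3$. It evaluates $S_1$ exactly via Lemma~\ref{lemma:constant_char} and the freeness hypothesis, and bounds everything else with the mixed estimate of Theorem~\ref{mainthm}. Theorem~\ref{lineprop} then follows because Lemma~\ref{lemma:g-free} guarantees the freeness hypothesis when $m$ is prime and $q$ is primitive modulo $m$.

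\textbf{Your route.} You use $X^m-1=(X-1)\Phi_m$ directly to see that exactly one point of the line fails to be normal. This is more elementary and self-contained, since it avoids the freeness criterion behind Lemma~\ref{lemma:g-free}. It also shows that Theorem~\ref{lineprop} needs no mixed sums at all. Indeed, $N\ge P-1$, where $P$ is the number of primitive elements on the line, and Katz's bound (Theorem~\ref{katzbd}) already gives $P\ge \epsilon(q^m-1)\bigl(q-(W(q^m-1)-1)(m-1)q^{1/2}\bigr)$. So your detour through additive characters of the trace is harmless but unnecessary.

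\textbf{What each buys.} The paper's route gives a uniform template, stated for every $m$ in terms of freeness, in which the new estimate Theorem~\ref{mainthm} is actually used. However, the same collapse occurs in that generality too. We have $\kappa_{x^m-1}=\kappa_{x-1}\,\kappa_{(x^m-1)/(x-1)}$. The second factor is constant on $\alpha\theta+\F$ by Lemma~\ref{lemma:constant_char}, and the first is the indicator of nonzero trace. So normality fails at most once on such a line.

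\textbf{Two small corrections.}

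First, the case you call the main obstacle cannot occur. If $q$ is primitive modulo $m$, then $\gcd(q,m)=1$, so $p\neq m$ and $m$ is a unit in $\F$. Hence $\alpha(\Tr_{q^m/q}(\theta)+mx)$ vanishes for exactly one $x$, whatever $\Tr_{q^m/q}(\theta)$ is. Excluding $p=m$ is genuinely needed, and your fallback would not have rescued that case. For $m=p$ odd and $\theta$ a root of an irreducible $x^p-x-a$, we have $\Tr_{q^p/q}(\theta)=0$. Then every translate has trace $0$, so none is normal, and the main term of the full additive-character expansion vanishes.

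Second, the second normality condition is mislabelled. Since $\Phi_m=1+X+\cdots+X^{m-1}$, $L_{\Phi_m}$ is the trace map itself, whose kernel is the trace-zero hyperplane. The condition coming from the factor $\Phi_m$ is $L_{X-1}(\beta)=\beta^q-\beta\neq 0$, and it is this map whose kernel is $\F$. Your resulting characterization is correct: $\beta$ is normal iff $\Tr_{q^m/q}(\beta)\neq 0$ and $\beta\notin\F$.
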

 	
	Our paper is organized as follows. Section~\ref{preliminaries} provides background material
	that is used along the way. In Section~\ref{main_proof} we extend the mixed character sum estimate of \eqref{perelbd} to all prime powers $q$, whilst also considering the case when $\chi$ is trivial, obtaining Theorem~\ref{mainthm}. In Section~\ref{line}, we apply this bound to prove Theorem~\ref{lineprop_gen}, a technical generalization of Theorem~\ref{lineprop}. In Section~\ref{sec:proof_lineprop}, we use Theorem~\ref{lineprop_gen} to obtain Theorem~\ref{lineprop}. We conclude with possible extensions of our work in Section~\ref{further_research}.
	
	\section{Preliminaries} \label{preliminaries}
	This section presents background information essential for establishing our results. We refer the interested reader to established textbooks such as \cite{lidlniederreiter97,mullen2013handbook} for further details and detailed proofs of the facts presented in the first three subsections of this section.
	\subsection{The trace function}
	We begin with the \emph{trace} of the extension $\Fm/\F$, that is, the function
	\[ \Tr_{q^m/q} : \Fm \to \F; x\mapsto \sum_{i=0}^{m-1} x^{q^i} . \]
	It is well-known that the trace function is surjective, $\F$-linear and transitive, that is, for $a,b\in\F$ and $x,y\in\Fm$, we have that
	\[ \Tr_{q^m/q} (ax+by) = a\Tr_{q^m/q}(x) + b\Tr_{q^m/q}(y) \]
	and, if we have the finite field tower $\F \subseteq \mathbb F_{q^n} \subseteq \Fm$, then, for $x\in\Fm$,
	\[ \Tr_{q^m/q}(x) = \Tr_{q^n/q}\left( \Tr_{q^m/q^n}(x) \right) . \]
	\begin{remark}
	Within the context of a line, the trace function behaves in a very predictable way. To see this, take the function
	\[ \Tr_{q^m/q} : \{ \alpha(\theta + x) , x\in\F\} \to \F ; a\mapsto \Tr_{q^m/q}(a) \]
	and observe that $\Tr_{q^m/m}(\alpha(\theta + x)) = \Tr_{q^m/m}(\alpha\theta) + x \Tr_{q^m/m}(\alpha)$. Thus, if $\Tr_{q^m/q}(\alpha)\neq 0$,
	it is a bijection,
	while it is constant otherwise. In other words, the line property for elements of prescribed trace is trivial and, as such, it cannot be combined with other properties, such as normality, in a meaningful way.
	\end{remark}
	
	\subsection{Finite field characters}
	
	In our work, the characteristic functions of primitive and normal elements play an important role. To represent those functions, the idea of character of finite Abelian group is necessary.
	
	\begin{definition}
		Let $\mathbb{G}$ be a finite Abelian group. A character $\chi$ of the group $\mathbb{G}$ is a homomorphism from $\mathbb{G}$ to $\mathbb{S}^1$, where $\mathbb{S}^1:= \lbrace z\in \mathbb{C}: |z| = 1 \rbrace$ is the multiplicative group of complex numbers of unit modulus, i.e. $\chi(a_1a_2)= \chi(a_1)\chi(a_2)$, for all $a_1, a_2 \in \mathbb{G}$. The special character $\chi_0: \mathbb{G} \rightarrow \mathbb{S}^1$ defined as  $\chi_0(a) = 1$, for all $a \in \mathbb{G}$ is called the trivial character of $\mathbb{G}$.
	\end{definition}
	
	In a finite field $\mathbb{F}_{q^m}$ there are two main group structures, one is the additive group $\mathbb{F}_{q^m}$ and the other is the multiplicative group $\mathbb{F}^*_{q^m}$. Therefore we have two types of characters pertaining to these two group structures, one is the arbitrary \emph{additive character} for $\mathbb{F}_{q^m}$ denoted by $\psi$ and the other one is the arbitrary \emph{multiplicative character} for $\mathbb{F}^*_{q^m}$ denoted by $\chi$. The \emph{canonical} character of $\Fm$ is defined as
	\[ \psi_1 : \Fm \to \mathbb{S}^1 ; ~a \mapsto e^{2\pi i \Tr_{q^m/p}(a)/p} , \]
	and it is an additive character of $\Fm$, while all the additive characters of $\Fm$ are of the form
	\[ \psi_c : \Fm \to \mathbb{S}^1 ; ~a \mapsto \psi_1(ca) , \]
	for some $c\in\Fm$, see Theorem~5.7 of \cite{lidlniederreiter97}.
	The multiplicative characters associated with $\mathbb{F}^*_{q^m}$ are extended from $\mathbb{F}^*_{q^m}$ to $\mathbb{F}_{q^m}$ by the rule
	\[ \chi(0):=\begin{cases}
		0,& \text{if } \chi\neq\chi_0,\\
		1,& \text{if } \chi=\chi_0. 
	\end{cases} \]
	
	\begin{definition}
		The \emph{$\mathbb{F}_q$-order} of an additive character $\psi \in \widehat{\mathbb{F}_{q^m}}$ is the monic $\mathbb{F}_{q}$-divisor $g$ of $x^m-1$ of minimal degree such that $\psi \circ g$ is the trivial character of $\widehat{\mathbb{F}_{q^m}}$, where ($\psi \circ g)(\alpha):= \psi(g \circ \alpha)$, and $g\circ \alpha = \sum_{i=0}^{n} a_i \alpha^{q^i}$ if $g(x)=\sum_{i=0}^{n} a_i x^{i}$ for any $\alpha \in \Fm$ and is denoted by $\Ord_q(\psi)$.
	\end{definition}
	
	It is well-known that, if $z\mid q^m-1$ there are exactly $\phi(z)$ multiplicative characters of order $z$, where $\phi$ stands for the Euler's phi function and the \emph{order} of a multiplicative character is defined naturally. Likewise, if $g\mid x^m-1$, there are exactly $\Phi(g)$ additive characters of $\F$-order $g$, where $\Phi$ stands for the Euler's function in the ring $\F[x]$.
	
		In the proceeding sections, we will need to focus a bit further into the nature of the additive characters of $\Fm$. The following lemma will be useful.
	
	\begin{lemma}\label{lemma:constant_char}
	Let $\psi$ be an additive character of $\Fm$. The restriction of $\psi$ in $\F$, $\psi|_{\F}$, is an additive character of $\F$. Also, the following are equivalent:
	\begin{enumerate}
	\item $\psi|_{\F}$ is the trivial additive character of $\F$. \label{it:ip1}
	\item $\psi = \psi_c$ for some $c\in\Fm$ such that $\Tr_{q^m/q}(c) = 0$. \label{it:ip2}
	\item $\Ord_q(\psi) \mid \frac{x^{m}-1}{x-1}$. \label{it:ip3}
	\end{enumerate}
	\end{lemma}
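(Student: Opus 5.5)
The first claim follows from the definitions. The restriction of a group homomorphism $\psi:\Fm\to\mathbb{S}^1$ to the additive subgroup $\F$ is again a homomorphism into $\mathbb{S}^1$, so $\psi|_{\F}$ is an additive character of $\F$. For the equivalences, write $\psi=\psi_c$ for some $c\in\Fm$, which is possible by Theorem~5.7 of \cite{lidlniederreiter97}. We then prove \ref{it:ip1}$\Leftrightarrow$\ref{it:ip2} and \ref{it:ip2}$\Leftrightarrow$\ref{it:ip3} separately.

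For \ref{it:ip1}$\Leftrightarrow$\ref{it:ip2}: take $a\in\F$. By transitivity and $\F$-linearity of the trace,
\[ \psi_c(a)=e^{2\pi i \Tr_{q/p}(\Tr_{q^m/q}(ca))/p}=e^{2\pi i \Tr_{q/p}(a\Tr_{q^m/q}(c))/p}. \]
So $\psi_c|_{\F}$ is the canonical-type character of $\F$ indexed by $\Tr_{q^m/q}(c)\in\F$. By the same theorem applied to $\F$ over $\mathbb F_p$, the map $b\mapsto(a\mapsto e^{2\pi i\Tr_{q/p}(ab)/p})$ is a bijection from $\F$ onto $\widehat{\F}$. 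In particular, the character indexed by $b$ is trivial exactly when $b=0$; this is the standard consequence of surjectivity of $\Tr_{q/p}$. Hence $\psi_c|_{\F}$ is trivial if and only if $\Tr_{q^m/q}(c)=0$. We must also note that \ref{it:ip2} does not depend on the chosen representation: $c$ is unique, since $c\mapsto\psi_c$ is a bijection.

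For \ref{it:ip2}$\Leftrightarrow$\ref{it:ip3}, set $T(x)=\frac{x^m-1}{x-1}=1+x+\dots+x^{m-1}$. Its associated linearized action is $T\circ\alpha=\sum_{i=0}^{m-1}\alpha^{q^i}=\Tr_{q^m/q}(\alpha)$. We need the standard fact that the set of $g\mid x^m-1$ with $\psi\circ g$ trivial consists exactly of the multiples of $\Ord_q(\psi)$ among the divisors of $x^m-1$. This holds because that set is closed under the gcd: $\gcd(g_1,g_2)=u g_1+v g_2$, and composition is additive and multiplicative in the sense $(fg)\circ\alpha=f\circ(g\circ\alpha)$. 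Thus \ref{it:ip3} is equivalent to $\psi_c\circ T$ being trivial, that is, to $\psi_1(c\Tr_{q^m/q}(\alpha))=1$ for all $\alpha\in\Fm$.

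Since $\Tr_{q^m/q}$ is surjective onto $\F$, this condition says exactly that $\psi_c|_{\F}$ is trivial, which is \ref{it:ip1}. This closes the cycle. The only step needing care is the divisibility characterization of $\Ord_q$ via the gcd argument. That argument needs the fact that $\F[x]$ acts on $\Fm$ as a module through linearized polynomials, and that this action commutes with additive characters as used. Everything else is direct computation.
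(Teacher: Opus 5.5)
Your proof is correct and follows essentially the same route as the paper: write $\psi=\psi_c$, use transitivity of the trace to identify $\psi_c|_{\F}$ with the character of $\F$ indexed by $\Tr_{q^m/q}(c)$, and translate condition (3) into triviality of $\psi_c$ on $\Tr_{q^m/q}(\Fm)=\F$. The differences are minor. You explicitly justify, via gcd closure, that $\psi\circ g$ is trivial exactly for the divisors of $x^m-1$ that are multiples of $\Ord_q(\psi)$, a fact the paper uses tacitly in the first step of its chain. You also close (3) back to (1) via surjectivity of the trace, rather than running the nondegeneracy argument through to $\Tr_{q^m/q}(c)=0$.
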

	\begin{proof}
	It is obvious that $\psi|_{\F}$ is an additive character of $\F$. Further, suppose that $\psi = \psi_c$.
	
	Now we will show that \eqref{it:ip1} $\iff$ \eqref{it:ip2}. Take some $a\in\F$. Then
	\[ \psi(a) = \psi_c(a) = \psi_1(ca) = e^{2\pi i \Tr_{q^m/p}(ca)/p } = e^{2\pi i \Tr_{q/p}(\Tr_{q^m/q}(ca)) /p} = e^{2\pi i \Tr_{q/p}\left( \Tr_{q^m/q}(c) \cdot a \right)/p} . \]
	In other words, when viewed as a character of $\F$, $\psi|_{\F} = \psi_{\Tr_{q^m/q}(c)}$ and the result follows.
	
	Finally, we focus on the equivalence \eqref{it:ip2} $\iff$ \eqref{it:ip3}. We have that
	\begin{align*}
	\Ord_q(\psi_c) \mid (x^{m}-1)/(x-1) & \iff \text{for every } a\in\Fm,\ \psi_c(\Tr_{q^m/q}(a)) = 1 \\
	 & \iff \text{for every } a\in\Fm,\ \psi_1(c \Tr_{q^m/q}(a)) = 1 \\
	 & \iff \text{for every } b\in\F,\ \Tr_{q^m/p}\left( c b\right) = 0 \\
	 & \iff \text{for every } b\in\F,\ \Tr_{q/p}\left(\Tr_{q^m/q}\left( c b\right)\right)  = 0\\
	 & \iff \text{for every } b\in\F,\ \Tr_{q/p}\left(b\Tr_{q^m/q}( c )\right)  = 0 \\
	 & \iff \Tr_{q^m/q}(c) = 0 . \qedhere
	\end{align*}
	\end{proof}
	
	\subsection{Freeness}
	
	The notion of freeness on both the multiplicative and the algebraic structure of finite field elements plays a vital role in this work.
	
		\begin{definition}
		Let $e\mid q^m-1$, then an element $\alpha \in \mathbb{F}^{*}_{q^m}$ is called \emph{$e$-free}, if $d\mid e$ and $\alpha = y^d$, for some $y \in \mathbb{F}^{*}_{q^m}$ imply $d=1$. It is clear from the definition that an element $\alpha \in \mathbb{F}^{*}_{q^m}$ is primitive if and only if it is a $(q^m-1)$-free element.  
	\end{definition}
	
		The additive group of $\mathbb{F}_{q^m}$ is an $\mathbb{F}_{q}[x]$-module under the rule \[f\circ\alpha := \sum_{i=0}^n a_i\alpha^{q^i},\] for $\alpha\in \mathbb{F}_{q^m}$, where $f(x)= \sum_{i=0}^na_ix^i \in \mathbb{F}_{q}[x]$. For $\alpha \in \mathbb{F}_{q^m}$, the \emph{$\mathbb{F}_q$-order} of $\alpha$ is the monic $\mathbb{F}_q$-divisor $g$ of $x^m-1$ of minimal degree such that $g \circ\alpha=0$ and is denoted by $\Ord_q(\alpha)$.
	
	\begin{definition}
		For $g\mid x^m-1$, an element $\alpha\in \mathbb{F}_{q^m}$ is called \emph{$g$-free} if $\alpha = h \circ \beta$ for some $\beta \in \mathbb{F}_{q^m}$ and $h\mid g$ imply $h=1$. From the definition, it is obvious that an element $\alpha \in \mathbb{F}_{q^m}$ is normal if and only if it is $(x^m-1)$-free. It is clear that $x^m-1$ can be freely replaced by its $p$-radical $g_0:= x^{m_0}-1$, where $m_0$ is such that $m=m_0p^a$, here $a$ is a nonnegative integer and $\gcd (m_0, p)=1$.
	\end{definition}
	
	For any $e\mid q^m-1$, the characteristic function of $e$-free elements of $\mathbb{F}^{*}_{q^m}$ can be expressed as follows.
	\begin{equation}\label{e-free ch}
		\rho_e: \mathbb{F}^{*}_{q^m}\rightarrow \{0,1\}; \alpha \mapsto \epsilon(e) \sum_{d\mid e} \left( \frac{\mu(d)}{\phi(d)} \sum_{(d)} \chi_d(\alpha) \right),
	\end{equation}
	where $\epsilon(e):= \phi(e)/e$, $\mu$ is the M\"obius function and $\chi_d$ stands for any character in $\widehat{\mathbb{F}_{q^m}^*}$ of order $d$ and the inner sum runs through all characters of order $d$.
	
	Similarly, for any $g\mid x^m-1$, the characteristic function of $g$-free elements of $\mathbb{F}_{q^m}$ can be expressed as follows:
	\begin{equation}\label{g-free ch}
		\kappa_g : \mathbb{F}_{q^m}\rightarrow \{0, 1\}; \alpha \mapsto \varepsilon(g) \sum_{f\mid g} \left( \frac{\mu^\prime(f)}{\Phi(f)} \sum_{(f)} \psi_f(\alpha) \right),
	\end{equation}
	where $\varepsilon(g):= \Phi(g)/{q^{\deg(g)}}$, $\psi_f$ stands for any character in $\widehat{\mathbb{F}_{q^m}}$ of $\mathbb{F}_{q}$-order $f$ and  $\mu^\prime$ is the analogue of the M\"obius function defined as follows:
	\[
	\mu^\prime(g):=\begin{cases}
		(-1)^s , & \text{if $g$ is the product of $s$ distinct irreducible monic polynomials}, \\
		0 , &\text{otherwise},
	\end{cases}
	\]
	and the inner sum runs through the additive characters of $\F$-order $f$.
	
	A set of elements of $\Fm$ that will be of interest for us is $(x^m-1)/(x-1)$-free elements. It is not hard to check that these elements do not belong to any intermediate extension of $\Fm/\F$, while, if $p\mid m$, then they are exactly the elements that are normal over $\F$. Additionally, if $m$ is a prime such that $q$ is \emph{primitive modulo $m$}, i.e., it generates the group $\mathbb Z_m^*$, then these elements are easily characterized as follows.
	
	\begin{lemma} \label{lemma:g-free}
	Suppose $m$ is a prime and $q$ is primitive modulo ${m}$. Then the $(x^m-1)/(x-1)$-free elements of $\Fm$ are exactly those that do not belong to any intermediate extension of $\Fm/\F$.
	\end{lemma}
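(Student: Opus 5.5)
Since $m$ is prime, the only intermediate fields of $\Fm/\F$ are $\F$ and $\Fm$. So the claim is that $\alpha\in\Fm$ is $(x^m-1)/(x-1)$-free if and only if $\alpha\notin\F$. The plan is to factor $g:=(x^m-1)/(x-1)=1+x+\dots+x^{m-1}$ over $\F$ and then read off which elements are $g$-free.

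First I will use the hypothesis that $q$ is primitive modulo $m$. If $m\neq p$, the roots of $g$ are the primitive $m$-th roots of unity. Their degree over $\F$ is the multiplicative order of $q$ modulo $m$, which is $m-1$. Hence $g$ is irreducible over $\F$. (If $m=p$, then $g=(x-1)^{m-1}$; this case is discussed below.)

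Next I will show that, for $g$ irreducible, $\alpha$ is $g$-free if and only if $\alpha\notin g\circ\Fm$. The only monic divisors of $g$ are $1$ and $g$, so by definition $\alpha$ is $g$-free exactly when $\alpha\neq g\circ\beta$ for every $\beta\in\Fm$. Now compute the image. For any $\beta$,
\[
g\circ\beta=\sum_{i=0}^{m-1}\beta^{q^i}=\Tr_{q^m/q}(\beta)\in\F .
\]
The trace is surjective onto $\F$, so $g\circ\Fm=\F$. Therefore $\alpha$ is $g$-free if and only if $\alpha\notin\F$, which is exactly the statement.

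The only real obstacle is the factorization step, and in particular the boundary case $m=p$. There $g=(x-1)^{m-1}$ is not irreducible, and $q\equiv 0\pmod m$ is not a unit, so "primitive modulo $m$" should exclude this case. I will note that the primitivity hypothesis forces $\gcd(q,m)=1$, hence $m\neq p$. Even without that hypothesis, a general version of the argument still works. The radical of $g$ is always $g_0=\prod_{P\mid g}P$, and freeness depends only on the radical, so $\alpha$ is $g$-free if and only if $\alpha\notin P\circ\Fm$ for each irreducible $P\mid g$. When $m=p$ the only such $P$ is $x-1$, and the image $(x-1)\circ\Fm=\{\beta^q-\beta\}$ is the kernel of the trace. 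That case would need a separate statement, so I will restrict to $m\neq p$ as the hypothesis implies.
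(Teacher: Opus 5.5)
Your proof is correct, and it reaches the conclusion by a different route from the paper's.

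Both arguments start from the same fact: $g=(x^m-1)/(x-1)$ is irreducible over $\F$ when $q$ has order $m-1$ modulo $m$. The paper cites Lidl--Niederreiter for this, and your root-of-unity argument has the same content.

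The paper then works through $\F$-orders. It cites a characterization of $g$-freeness from Huczynska--Mullen--Panario--Thomson to get that a $g$-free element $a$ satisfies $g\mid\Ord_q(a)$, hence $\Ord_q(a)\in\{g,x^m-1\}$. It then concludes $a\notin\F$, since $a\in\F$ exactly when $\Ord_q(a)\mid x-1$. The converse direction is left to the cited characterization.

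You instead observe that the only nontrivial monic divisor of $g$ is $g$ itself, and you compute its image directly: $g\circ\beta=\Tr_{q^m/q}(\beta)$, so $g\circ\Fm=\F$ by surjectivity of the trace. This gives both directions at once and needs no external reference.

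Your route is more elementary and self-contained, and it shows plainly why exactly the elements of $\F$ fail to be free. The paper's route places the lemma in the order-based framework ($a$ is $g$-free iff $g\mid\Ord_q(a)$ when $x^m-1$ is squarefree), which carries over unchanged to other divisors of $x^m-1$.

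Your remark that primitivity of $q$ modulo $m$ forces $\gcd(q,m)=1$, and hence $m\neq p$, is also correct. It makes explicit an assumption the paper uses silently when it calls $x^m-1=g(x)(x-1)$ a prime decomposition.
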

	\begin{proof}
	Set $g(x) := (x^m-1)/(x-1)$. The restrictions on $m$, along with Theorems~2.45 and 2.47 of \cite{lidlniederreiter97}, yield that $g$ is irreducible, hence
	\[ x^m-1 = g(x) (x-1) \]
	is the prime decomposition of $x^m-1$. Now, take some $a\in\Fm$ that is $g$-free. Theorem~5.4 of \cite{huczynskamullenpanariothomson13} yields that $g\mid \Ord_q(a)$, that is, $\Ord_q(a) = g$ or $\Ord_q(a) = x^m-1$. The desired result follows.
	\end{proof}

    \subsection{Divisors and Class Groups} \label{subsec:div}
    
    Let $F/K$ be a field extension, such that, for some $x\in F$, $x$ is transcendental over $K$ and $F/K(x)$ is finite. In this case, $F/K$ (or simply $F$) is called a \emph{function field of one variable} over $K$. In particular, the function field $K(x)/K$ is called \emph{rational} and if, $K$ is finite, i.e., $K=\F$ for some prime power $q$, the function field $F/K$ is called \emph{global}. 
    
    A \emph{discrete valuation ring} of the function field $F/K$ is a ring $K\subsetneqq \Oo\subsetneqq F$ with the property $a\in\Oo$ or $a^{-1}\in\Oo$ for all $a\in F$. It can be shown that a discrete valuation ring is a local ring, that is, it has a unique maximal ideal, whilst this ideal is principal. Additionally, the maximal ideal $\p$ of the discrete valuation ring $\Oo$ of the function field $F/K$ is called a \emph{place} of $F/K$ and, if $\p = t\Oo$, then each $z\in F^*$ has a unique representation of the form $z=t^{\nu_\p(z)} u$, where $\nu_\p(z) \in\mathbb Z$ and $u\in\Oo^*$. Furthermore, it can be seen that, whilst $t$ above is not unique, the number $\nu_\p (z)$ does not depend on its choice. Also,
    \begin{align*}
    \Oo & = \{ z\in F : \nu_\p(z) \geq 0 \} \cup \{ 0 \}, \\
    \Oo^* & = \{ z\in F : \nu_\p(z) = 0 \} \text{ and} \\
    \p & = \{ z\in F : \nu_\p(z) > 0 \}.
    \end{align*}
  The above facts show a bijection between the places and the discrete valuation rings of $F/K$, so the terms ``place of the valuation ring $\Oo$'' or ``valuation ring of the place $\p$'' make sense. Finally, observe that, since $\p$ is a maximal ideal of $\Oo$, the ring $\Oo/\p$ is a field, thus the number $[\Oo/\p : K]$ is well-defined and it is called the \emph{degree} of the place $\p$ and denoted by $\deg(\p)$. It is known that the degree of a place $\p$ is finite.
    
    An important family of functions of $F$ is that of discrete valuations. Namely, a \emph{discrete valuation} of $F$ is a function
    \[ \nu : F\to \mathbb Z\cup \{\infty \} ; a\mapsto \nu(a) , \]
    such that, for all $a,b\in F$,
    \begin{enumerate}
      \item $\nu(a)=\infty \iff a=0$,
      \item $\nu(ab) = \nu(a) + \nu(b)$,
      \item $\nu(a+b) \geq \min\{ \nu(a),\nu(b) \}$,
      \item there exists some $c\in F$, such that $\nu(c)=1$ and
      \item $\nu(d)=0$ for all $d\in K^*$.
    \end{enumerate}
    In particular, if $\p$ is a place of the function field $F/K$ the function $\nu_\p$ defined above, extended to zero by the rule $\nu_\p(0) = \infty$, is a discrete valuation called the \emph{$\p$-order}.
    
    The free Abelian group generated by the places of $F/K$ called the \emph{divisor group} and denoted by $\Div(F)$ and its elements are called \emph{divisors} and are usually depicted as formal sums, although we will opt for a multiplicative notation. The \emph{unit element of the divisor group} is the divisor $1_{\Div(F)} := \prod_\p \p^0$, where the product runs through the places of $F/K$. Also, the \emph{degree} of the divisor $D = \prod_\p \p^{d_\p}$, where $d_\p\in\mathbb Z$ and almost all of the $d_\p$'s are zero, is defined naturally as the number $\deg(D) := \sum_\p d_\p \deg(\p)$. The \emph{support} of the divisor $D = \prod_\p \p^{d_\p}$ is the set
    \[ \supp(D) := \{ \p : d_\p \neq 0 \} \]
    and it is always a finite subset of the set of places of $F/K$.
    A special family of divisors is that of \emph{principal divisors}, that is, divisors of the form
    \[ [a] = \prod_\p \p^{\nu_\p(a)} , \]
    where $a\in F^*$ and the product runs through the places of $F/K$. It is clear that the principal divisors of $\Div(F)$ form a subgroup of $\Div(F)$, called the \emph{group of principal divisors} and denoted by $\Prin(F)$, while the quotient group
    \[ \Cl(F) := \Div(F)/ \Prin(F) \]
    is called the \emph{divisor class group} of $F$, while two divisors that are in the same class within that group are called \emph{linearly equivalent}. A natural partial ordering is defined in $\Div(F)$ and a divisor $D \geq 1_{\Div{F}}$ (that is, $D = \prod_\p p^{d_\p}$, with $d_\p\geq 0$ for all $\p$) is called \emph{integral}, while the semigroup of integral divisors is denoted by $\Int(F)$.
    
    Next, we turn our attention to the rational function field $K(x)/K$. From the above discussion every place of this function field gives rise to a discrete valuation. However, it is known that in this case there are only two types of discrete valuations:
    \begin{enumerate}
    \item For every irreducible $\pi\in K[x]$, the map
    \[ \nu_\pi : K(x) \to \mathbb Z\cup \{ \infty\} ; f\mapsto \nu_\pi (f) , \]
    where $\nu_\pi(0) = \infty$ and, for $f\neq 0$, $\nu_\pi(f) = n$, where $n$ is the unique integer such that $f = \pi^n g/h$, where $g,h\in K[x]$ are relatively prime to $\pi$, is a discrete valuation of $K(x)$.
    \item The map
    \[ \nu_\infty : K(x) \to \mathbb Z\cup\{ \infty\} ; f \mapsto \nu_\infty (f) , \]
    where $\nu_\infty(0) = \infty$ and, for $f\neq 0$, $\nu_\infty(f) = -\deg(f)$ is a discrete valuation of $K(x)$.
    \end{enumerate}
    It follows that the places of $K(x)$ are exactly the places $\p_\pi := \langle \pi\rangle$, where $\pi\in K[x]$ is a monic irreducible polynomial, called the \emph{finite} places of $K(x)/K$, and $\p_\infty$, called the \emph{infinite} place of $K(x)/K$. Also, one can see that $\deg(\p_\pi) = \deg(\pi)$ and that $\deg(\p_\infty) = 1$. Finally, from the above, it follows that in the rational function field $K(x)/K$, some $f\in K[x]$, if $f=\prod_\pi \pi^{f_\pi}$ is its irreducible decomposition, corresponds (in addition to its corresponding principal divisor) to the divisor
    \[ (f) = \prod_\pi \p_\pi^{f_\pi} \in \Int(K(x)) . \]
    
    For more details on the above facts and detailed proofs we refer the interested reader to Chapter~1 of \cite{stichtenoth09}.

\subsection{Characters modulo a divisor}
We will now define the character modulo an integral divisor. Take some $I\in\Int(F)$. The set $D_I := \{ D\in \Div(F) : \supp(D)\cap\supp(I) = \emptyset\}$ is a subgroup of $\Div(F)$ and the set
\[ E_I := \{ [a] : a \in F^* \text{ and }a\equiv 1\pmod{I} \} \]
is a subgroup of $D_I$ called the \emph{ray} of $I$, where the congruence means that $[a-1]/I \in\Int(F)$.
A homomorphic map $X$ from $\Int(F)$ to a finite and closed under multiplication subset of $\mathbb C$, extended multiplicatively to the whole group $\Div(F)$, is called a \emph{character modulo $I$} (or just a \emph{character}) if
\begin{enumerate}
  \item $G_X := \{D \in\Div(F) : X(D)\neq 0\} = D_I$ and
  \item $E_I \subseteq G_X^1 := \{ D\in \Div(F) : X(D) = 1 \}$.
\end{enumerate}
Finally, a character is \emph{nonsingular}, if it is nontrivial on divisors of degree $0$. For more on this form of characters, we refer the interested reader to \cite{perel2,MR241424} and the references therein.

	\section{A simpler proof of Theorem~\ref{mainthm}}\label{main_proof}
	
   Inspired by the work of \cite{perel2}, we turn our attention to the global rational function field $\F(x)/\F$. Following the discussion of Subsection~\ref{subsec:div}, the places of this function field are exactly the places that correspond to monic irreducible polynomials of $\F$ and $\p_\infty$. In particular, the places of degree $1$ are exactly the ones that correspond to monic linear polynomials and $\p_\infty$. Moreover, for a nonsingular character $X$ modulo $I$, the results of \cite{MR241424}, for this special case of function fields yield the bound
   \begin{equation}\label{eq:perel}
   \left| \sum_{\deg\p = 1} X(\p) \right| \leq (\deg(I)-2) q^{1/2} .
   \end{equation} 

		Now, let $g(x)$ be the minimal polynomial of $\theta$ over $\F$. We define a character $X$ modulo $I=\p_g \, \p_{\infty}^2$ as follows.
		Take some integral $\aaa\in D_{I}$ and take the monic polynomial $f_\aaa\in\F[x]$, such that $(f_\aaa) = \aaa$. Clearly, $g\nmid f_\aaa$, hence $f_\aaa(\theta) \in \Fm^*$. Therefore $\chi(f_\aaa(\theta)) \neq 0$. Next, let $S_\aaa$ be the sum of all the roots of $f_\aaa$, with multiplicities. It is clear that, $S_\aaa \in \F$, therefore assign 
        \[ X (\aaa)  :=   \chi(f_\aaa(\theta)) \psi(-S_\aaa). \]
         Since, for $\aaa, \bb\in D_I$ integral divisors, we have that $f_{\aaa \bb}=f_{\aaa} f_\bb$, we also have that $S_{\aaa\bb} = S_\aaa + S_\bb$, thus $X (\aaa \bb) =X (\aaa) \, X(\bb)$, so that the definition of $X$ can be extended to the whole of $D_I$.
		
		Suppose that $D \in E_I$, where $D=\aaa / \bb$ for $\aaa, \bb \in E_{I}\cap \Prin(\F(x))  $. Since $D \in E_I$, we have that $D$ is linearly equivalent to $1_{\Div(\F(x))}$, which implies, see Corollary~1.4.12 of \cite{stichtenoth09}, that $\deg(D) =0$. Consequently, $\deg(\aaa)=\deg(\bb)$ so that $D=\aaa / \bb=(f_{\aaa}) /(f_{\bb})=(f_{\aaa} / f_{\bb}) = [f_{\aaa} / f_\bb]$. Additionally, $D \in {E}_{I}$ implies $D=[r(x)]$ for some $r(x)$ such that $r(x) \equiv 1 \pmod I$.
        
		Since both $r(x)$ and $f_\aaa/f_\bb$ generate the same ideal, they differ by a nonzero element of $\F$. Hence $f_{\aaa} / f_{\bb} \equiv e \pmod I$ for some $e\in\F^*$. Now, from the definition of $I$, we obtain $\nu_{\infty} (f_\aaa / f_\bb -e) \geq 2$ and $\nu_g (f_{\aaa} / f_{\bb}-e) \geq 1$. {Since $\deg(\aaa) = \deg(\bb)$, we may assume that 
        \[
        f_{\aaa} = x^k + a_1 x^{k-1} + \ldots + a_{k} \ \  \text{and} \ \ f_{\bb} = x^k + b_1 x^{k-1} + \ldots + b_{k}, 
        \]
        where ${a_i}'s$ and ${b_i}'s$ are in $\F$. Expanding $f_\aaa / f_\bb$ formally as a power series in $1/x$ gives
        \[
        \frac{f_\aaa}{f_\bb} - e= c_0 +c_1 \frac{1}{x} + c_2 \frac{1}{x^2} + \ldots,
        \]
        where $c_0 = 1-e$ and $c_1 = a_1 - b_1$. Since the coefficients $a_1$ and $b_1$ are equal to the sums of the roots of $f_{\aaa}$ and $f_{\bb}$, respectively, it follows from $\nu_{\infty}\!\left(f_\aaa / f_\bb - e\right) \ge 2$ that $e = 1$ and $S_{\aaa} = S_{\bb}$.
        Further, $\nu_g (f_{\aaa} / f_{\bb}-e) \geq 1$ implies $f_{\aaa} / f_{\bb}-e = g \cdot h_1 /h_2$, where $h_1, h_2 \in \F[x]$ and $g \nmid h_2$. Thus, given $e=1$, $f_{\aaa}(\theta) = f_{\bb}(\theta)$, hence $X(D) = 1$. Consequently, $X$ is a finite-valued homomorphism from $D_{I}$ to the points of the complex unit circle with kernel containing $E_{I}$. We set $X(\aaa)=0$ for all other divisors $\aaa$.
		
		In order to clarify that $X$ is a nonsingular character, i.e., it is nontrivial on the subgroup of divisors of degree zero, we consider the following cases.  

		  \textbf{Case I}: $\chi$ is nontrivial. 
		  In this case, there exists some $\beta \in \Fm^*$ such that $\chi(\beta) \neq 1$
		  and some $f\in\F[x]$ of degree $\leq n$ such that $f(\theta)=\beta$. Set $f'(x)=x^{q^n-1} + f(x) -1$.  Therefore, $r'(x) := f'(x)/({x^{q^n-1}}+g(x))$ is a rational function of degree $0$ such that $r'(\theta) = \beta$. The divisor corresponding to $r'(x)$ has degree zero and 
        \[
        X((r'(x))) = \chi(r'(\theta)) \cdot \psi(S_{(r'(x))}) , 
        \]
with $\chi(r'(\theta)) = \chi(\beta) \neq 1$. Note that since $\psi$ is an additive character on $\F$, the product $\chi(r'(\theta)) \cdot \psi(S_{(r'(x))})$ can not equal to $1$ as the order of $\chi(r'(\theta))$ is relatively prime to the order of $\psi(S_{(r'(x))})$ in $\mathbb{C}^*$. This implies $X((r'(x))) \neq 1$, that is, $X$ is nonsingular.

\textbf{Case II}: $\psi$ is nontrivial. The map $\F \times \F \rightarrow \F, (t,s)\mapsto t-s$ is surjective. This allow us to choose $t_0, s_0 \in \F$ such that $\psi(t_0-s_0) \neq 1$ as $\psi$ is nontrivial. The rational function $r'(x)=\frac{x-t_0}{x-s_0}$ corresponds to a divisor of degree zero and $\psi(S_{(r'(x))}) \neq 1$. The order argument at the end of Case~I ensures that $X((r'(x))) \neq 1$ and hence $X$ is nonsingular.

	 We have established that $X$ is nonsingular. Now, since the places of degree one are precisely $\p_{\infty}$ and $\p_{\pi_t}$, where $\pi_t(x)=x+t$, and given that $X\left(\p_{\infty}\right)=0$ and $\deg(I) = m+2$, from \eqref{eq:perel},
 		\[
 		\left|\sum_{t \in \F} \chi(\theta+t) \psi(t) \right| \leq m q^{1 / 2} \Rightarrow  \left|\sum_{t \in \F} \chi(\theta+t) \psi(t) \right| \leq m q^{1 / 2} .
 		\]		
The proof of Theorem~\ref{mainthm} is now complete.
	
	\section{The weak line property for primitive normal elements}\label{line}
	
	Let $\theta$ be a generator of the extension $\Fm/\F$ and some $\alpha\in\F$ be such that $\alpha\theta$ is $g$-free, where $g(x) = (x^m-1)/(x-1)$. In this section, we will focus on lines such that $\alpha\theta$ is $(x^m-1)/(x-1)$-free and prove that, for $q$ large enough, every such line contains a primitive normal element. As a consequence of that, we will obtain the weak line property for primitive normal elements for a specific types of extensions.
	%
	
	 So, now suppose that $\theta$ is a generator of $\Fm/\F$ and $\alpha \in \F$, such that $\alpha\theta$ is $(x^m-1)/(x-1)$-free, and set as $\N_{\alpha, \theta}$ the number of elements within the line that are primitive and normal. For our purposes, it suffices to show that $\N_{\alpha,\theta} \neq 0$.
	
	
	\begin{lemma} \label{lemma:N}
		Let $\N_{\alpha,\theta}$ be as defined above. Then
		\[
			\N_{\alpha,\theta} \geq \epsilon(q^m-1)\varepsilon(x^m-1)\left[\frac{q}{\varepsilon\left( \frac{x^m-1}{x-1} \right)} - W(q^m-1)W(x^m-1)q^{1/2} \right],
		\]
		%
%
	where $W(g)$, for $g\in\F[x]$, stands for the number of squarefree monic divisors of $g$ in $\F[x]$ and $W(a)$, for $a\in\mathbb Z$, stands for the number of positive squarefree divisors of $a$.
	\end{lemma}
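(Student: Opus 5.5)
The plan is to write $\N_{\alpha,\theta}$ as a sum over the line of the product of the two characteristic functions, expand with characters, and separate the terms that give $q$ from the terms controlled by Theorem~\ref{mainthm}. Since $\alpha\in\F^*$, every point $\alpha(\theta+x)$ generates $\Fm$, so it is nonzero. Hence
\[
\N_{\alpha,\theta}=\sum_{x\in\F}\rho_{q^m-1}\bigl(\alpha(\theta+x)\bigr)\,\kappa_{x^m-1}\bigl(\alpha(\theta+x)\bigr).
\]
Substituting \eqref{e-free ch} and \eqref{g-free ch} gives
\[
\N_{\alpha,\theta}=\epsilon(q^m-1)\varepsilon(x^m-1)\sum_{d\mid q^m-1}\sum_{f\mid x^m-1}\frac{\mu(d)\mu'(f)}{\phi(d)\Phi(f)}\sum_{(d)}\sum_{(f)}S(\chi_d,\psi_f),
\]
where
\[
S(\chi_d,\psi_f)=\psi_f(\alpha\theta)\sum_{x\in\F}\chi_d(\theta+x)\,\chi_d(\alpha)\,\psi_f|_{\F}(\alpha x).
\]
This uses additivity of $\psi_f$ and multiplicativity of $\chi_d$. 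Only squarefree $d$ and $f$ contribute.

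\textbf{Step 1: the terms with $d=1$.} Here $\chi_d$ is trivial and there are two kinds of $f$.
\begin{enumerate}
\item If $f\mid (x^m-1)/(x-1)$, then $\psi_f|_{\F}$ is trivial by Lemma~\ref{lemma:constant_char}, so $S=q\,\psi_f(\alpha\theta)$.
\item If $f\nmid (x^m-1)/(x-1)$, then $\psi_f|_{\F}$ is nontrivial by Lemma~\ref{lemma:constant_char}. So is $x\mapsto\psi_f(\alpha x)$, because $\alpha\neq0$. Orthogonality then gives $S=0$ exactly.
\end{enumerate}
The surviving $d=1$ terms add up to
\[
q\sum_{f\mid \frac{x^m-1}{x-1}}\frac{\mu'(f)}{\Phi(f)}\sum_{(f)}\psi_f(\alpha\theta)=\frac{q}{\varepsilon\bigl(\frac{x^m-1}{x-1}\bigr)}\,\kappa_{\frac{x^m-1}{x-1}}(\alpha\theta).
\]
By hypothesis $\alpha\theta$ is $(x^m-1)/(x-1)$-free, so $\kappa=1$. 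This is exactly the main term $q/\varepsilon\bigl(\frac{x^m-1}{x-1}\bigr)$ in the statement.

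\textbf{Step 2: the terms with $d>1$.} Now $\chi_d$ is nontrivial. The inner sum is $\sum_{x}\chi_d(\theta+x)\psi'(x)$, where $\psi'(x):=\psi_f(\alpha x)$ is some additive character of $\F$, possibly trivial. Theorem~\ref{mainthm} applies, as does Theorem~\ref{katzbd} when $\psi'$ is trivial, and bounds each such $|S|$ by a constant times $q^{1/2}$. For fixed $d$ and $f$ there are $\phi(d)\Phi(f)$ pairs of characters, which cancels the denominator. So the error is at most (that constant)$\,\cdot\,q^{1/2}$ times the number of squarefree pairs $(d,f)$ with $d>1$. That number is $(W(q^m-1)-1)W(x^m-1)\le W(q^m-1)W(x^m-1)$. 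Factoring out $\epsilon(q^m-1)\varepsilon(x^m-1)$ gives the displayed shape.

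\textbf{Main obstacle.} The hard step is getting the error constant to be exactly $q^{1/2}$ as displayed, rather than $mq^{1/2}$. Theorem~\ref{mainthm} delivers $mq^{1/2}$, and Theorem~\ref{katzbd} only delivers $(m-1)q^{1/2}$. My first attempt would be to use the slack from Step 1. The pairs with $d=1$ and $f\nmid (x^m-1)/(x-1)$ contribute $0$, and the pairs with $d=1$ contribute nothing to the error at all. So the true count of error pairs, $(W(q^m-1)-1)W(x^m-1)$, is strictly less than $W(q^m-1)W(x^m-1)$.

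However, this slack is only one factor of $W(x^m-1)$. It does not absorb a factor of $m$ in general, so I do not expect this to close the gap. If no sharper bound for the sums with nontrivial $\chi$ is available, the honest conclusion of the plan is the weaker bound with $mq^{1/2}$ in place of $q^{1/2}$. That weaker bound is the inequality the argument actually yields, and it still suffices for large $q$ in Section~\ref{line}.
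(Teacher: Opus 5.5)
Your argument is correct and is essentially the paper's proof. It uses the same character expansion and the same split into the main term ($d=1$, $f\mid \frac{x^m-1}{x-1}$), the remaining $d=1$ terms, and the $d\neq 1$ terms handled by Theorem~\ref{mainthm}. The only difference is that you show the middle group vanishes exactly by orthogonality, whereas the paper merely bounds it by $\bigl(W(x^m-1)-W(\frac{x^m-1}{x-1})\bigr)mq^{1/2}$.

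Your diagnosis of the obstacle is also right. The paper's bounds \eqref{eq:S2} and \eqref{eq:S3} both carry the factor $m$, so its proof likewise yields only the inequality with $W(q^m-1)W(x^m-1)\,m\,q^{1/2}$. The missing $m$ in the displayed statement is evidently a typo, and the version with $m$ is all that Theorem~\ref{lineprop_gen} needs.
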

	\begin{proof}
		By definition we have that
	\[	\N_{\alpha,\theta} = \sum_{x \in \F}\rho_{q^m-1}(\alpha(\theta+x))\kappa_{x^m-1}(\alpha(\theta+x)), \]
	where $\rho_{q^m-1}$ and $\kappa_{x^m-1}$ are the characteristic functions of primitive elements and normal elements, respectively.

		Using \eqref{e-free ch}) and \eqref{g-free ch} we get,
		\begin{align*}
			\N_{\alpha,\theta} &= \epsilon(q^m-1)\varepsilon(x^m-1) \sum_{x \in \F}\sum_{\substack{d\mid q^m-1 \\ f\mid x^m-1}}\frac{\mu(d)\mu'(f)}{\phi(d)\Phi(f)}\sum_{(d), (f)}\chi_d(\alpha(\theta+x))\psi_f(\alpha(\theta+x))  \\
			&= \epsilon(q^m-1)\varepsilon(x^m-1)\sum_{\substack{d\mid q^m-1 \\ f\mid x^m-1}}\frac{\mu(d)\mu'(f)}{\phi(d)\Phi(f)}\sum_{(d), (f)}\chi_d(\alpha)\psi_f(\alpha\theta)\sum_{x \in \F}\chi_d(\theta+x)\psi_f(\alpha x) , 
		\end{align*}
		hence,
		\begin{equation} \label{eq:S1S2S3}
		 \N_{\alpha,\theta} = \epsilon(q^m-1)\varepsilon(x^m-1) [ S_1 + S_2 + S_3 ] , 
		\end{equation}
		where, $S_1$ stands for the part of the sum that corresponds to $d=1$ and $f\mid (x^m-1)/(x-1)$, $S_2$ for the part that corresponds to $d=1$ and $f\nmid (x^m-1)/(x-1)$ and $S_3$ to the part that corresponds to $d\neq 1$.
		
		Regarding, $S_1$, Lemma~\ref{lemma:constant_char} implies that $\psi_f(\alpha x) = \psi_c(\alpha x) = \psi_{c\alpha}(x)$, for some $c\in\Fm$, such that $\Tr_{q^m/q}(c) = 0$. However, $\Tr_{q^m/q}(c\alpha) = \alpha\Tr_{q^m/q}(c) = 0$. Thus, Lemma~\ref{lemma:constant_char} yields $\psi_f(\alpha x)=1$ for all $x\in\F$. It follows that $\chi_1(\theta+x) = \psi_f(\alpha x)=1$ for all $x\in\F$. Thus,
		\begin{equation}\label{eq:S1}
		 S_1 = \sum_{f\mid (x^m-1)/(x-1)} \frac{\mu'(f)}{\Phi(f)} \sum_{(f)} \psi_f(\alpha\theta) \sum_{x\in\F} 1 = \frac{q}{\varepsilon\left( \frac{x^m-1}{x-1} \right)} \cdot \kappa_{(x^m-1)/(x-1)} (\alpha\theta) = \frac{q}{\varepsilon\left( \frac{x^m-1}{x-1} \right)} ,
		 \end{equation}
		since $\alpha\theta$ is $(x^m-1)/(x-1)$-free.
		
		In a similar fashion as above, Lemma~\ref{lemma:constant_char} implies that in the case of $S_2$, we obtain that $\psi_f(\alpha x)$, when seen as an additive character of $\F$, is nontrivial. Therefore,
		\[ S_2 = \sum_{f\nmid (x^m-1)/(x-1)} \frac{\mu'(f)}{\Phi(f)} \sum_{(f)} \psi_f(\alpha\theta) \sum_{x\in\F} \psi_f(\alpha x) \]
		 and Theorem~\ref{mainthm} gives us that the absolute value of the inner sum is bounded by $mq^{1/2}$. This combined with the fact that there are exactly $\Phi(f)$ characters of $\F$-order $f$ implies,
		\begin{equation} \label{eq:S2}
		 |S_2| \leq \left( W(x^m-1) - W\left(\frac{x^m-1}{x-1}\right) \right) m q^{1/2} .
		 \end{equation}
		
		Finally, we have 
		\begin{equation*}
			S_3=\sum_{\substack{d\mid q^m-1, d \neq1 \\ f\mid x^m-1}}\frac{\mu(d)\mu'(f)}{\phi(d)\Phi(f)}\sum_{(d), (f)}\chi_d(\alpha)\psi_f(\alpha\theta)\sum_{x \in \F}\chi_d(\theta+x)\psi_f(\alpha x)
		\end{equation*}We note that in the case of $S_3$, the multiplicative character $\chi_d(\theta+x)$ is non-trivial and Theorem~\ref{mainthm} gives us that the absolute value of the inner sum is bounded by $mq^{1/2}$. This again combined with the fact that there are exactly $\phi(d)$ characters in $\widehat{\mathbb{F}_{q^m}^*}$ of order $d$ and and $\Phi(f)$ characters in $\widehat{\mathbb{F}_{q^m}}$ of $\F$-order $f$ implies, 
		\begin{equation} \label{eq:S3}
		|S_3| \leq (W(q^m-1)-1)W(x^m-1) m q^{1/2} .
		\end{equation}
		The result follows upon combining \eqref{eq:S1S2S3}, \eqref{eq:S1}, \eqref{eq:S2} and \eqref{eq:S3}.
	\end{proof}
	
From the above, one sees that the numbers $W(q^m-1)$ and $W(x^m-1)$ have to be estimated. Regarding $x^m-1$ one easily obtains the trivial bound
\begin{equation} \label{eq:W1}
  W(x^m-1) \leq 2^m .
\end{equation}
Regarding, $W(q^m-1)$, we have that, from Theorem~13.12 of \cite{apostol76},
\[
  W(q^m-1) = o((q^m-1)^{\delta})
\]
for any $\delta > 0$, where $o$ signifies for the little-o notation. In particular, for a fixed integer $m$,
\begin{equation} \label{eq:W2}
W(q^m-1) = o(q^{1/4}).
\end{equation}

We combine \eqref{eq:W1}, \eqref{eq:W2} and the fact that $\varepsilon(g) < 1$ for every $g\in\F[x]$ with Lemma~\ref{lemma:N} and readily obtain that, for $q$ large enough, $\N_{\alpha,\theta}\neq 0$. In other words, we have the following.
\begin{theorem}\label{lineprop_gen}
Fix an integer $m$. There exists some number $\Ll_m$, such that for every prime power $q\geq \Ll_m$, every line of the extension $\Fm/\F$ in which $\alpha\theta$ is $(x^m-1)/(x-1)$-free contains a primitive normal element.
\end{theorem}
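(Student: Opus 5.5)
The plan is to read Theorem~\ref{lineprop_gen} off Lemma~\ref{lemma:N}. Fix $m$, a generator $\theta$ of $\Fm/\F$ and $\alpha\in\F^*$ such that $\alpha\theta$ is $(x^m-1)/(x-1)$-free. Since $\epsilon(q^m-1)>0$ and $\varepsilon(x^m-1)>0$, Lemma~\ref{lemma:N} gives $\N_{\alpha,\theta}>0$ as soon as
\[ \frac{q}{\varepsilon\left(\frac{x^m-1}{x-1}\right)} > W(q^m-1)\,W(x^m-1)\,q^{1/2}. \]
The goal is to make this inequality hold uniformly in $\theta$ and $\alpha$. Once $q$ is fixed, neither side depends on $\theta$ or $\alpha$, so one threshold will serve every line.

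For the left-hand side I will use the fact that $0<\varepsilon(g)\le 1$ for every $g\in\F[x]$. This gives
\[ \frac{q}{\varepsilon\left(\frac{x^m-1}{x-1}\right)}\ge q, \]
so it is enough to prove $W(q^m-1)\,W(x^m-1)<q^{1/2}$. For the right-hand side, \eqref{eq:W1} gives $W(x^m-1)\le 2^m$, which is a constant once $m$ is fixed. By \eqref{eq:W2}, $W(q^m-1)=o(q^{1/4})$ as $q\to\infty$ with $m$ fixed. Hence there is some $\Ll_m$ such that for all prime powers $q\ge\Ll_m$ we have $W(q^m-1)<q^{1/4}$ and $2^m<q^{1/4}$. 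For such $q$ the product $W(q^m-1)W(x^m-1)$ is less than $q^{1/2}$, and therefore $\N_{\alpha,\theta}>0$.

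The one point needing care is that \eqref{eq:W2} is really a statement about the integers $n=q^m-1$. Theorem~13.12 of \cite{apostol76} is used there as $W(n)=2^{\omega(n)}=o(n^{\delta})$ for every $\delta>0$. I will take $\delta=1/(4m)$; then $W(q^m-1)=o(q^{1/4})$, uniformly over all prime powers $q$, because the bound holds for all large integers $n$. I do not expect a real obstacle here: the estimate is non-effective, but that does not matter, since the theorem only asserts that $\Ll_m$ exists. One small thing to check is that the proof of Lemma~\ref{lemma:N} applies Theorem~\ref{mainthm} with $\psi_f(\alpha\,\cdot)$, which requires $\alpha\neq0$. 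This holds because $\alpha\theta$ is $(x^m-1)/(x-1)$-free and hence nonzero.
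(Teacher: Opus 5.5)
Your proposal is correct and follows the paper's argument: you combine Lemma~\ref{lemma:N} with $\varepsilon\le 1$, $W(x^m-1)\le 2^m$ and $W(q^m-1)=o(q^{1/4})$, and your explicit choice $\delta=1/(4m)$ and your check that $\alpha\neq 0$ are details the paper leaves implicit. One harmless caveat is that the proof of Lemma~\ref{lemma:N} actually gives the error term $m\,W(q^m-1)W(x^m-1)q^{1/2}$, since each inner sum is bounded by $mq^{1/2}$ via Theorem~\ref{mainthm} and the lemma's statement drops this factor $m$, so when choosing $\Ll_m$ you should also require $m2^m<q^{1/4}$, which costs nothing because $m$ is fixed.
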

%
%
\section{Proof of Theorem~\ref{lineprop}} \label{sec:proof_lineprop}
In this section we briefly explain how Theorem~\ref{lineprop_gen} implies Theorem~\ref{lineprop}.

Let $\Fm/\F$ be an extension, such that $q$ is primitive modulo $m$ and $q\geq \Ll_m$. Let $\theta$ be a generator of that extension and take some $a\in\F^*$. Lemma~\ref{lemma:g-free} yields that $\alpha\theta$ is $(x^m-1)/(x-1)$-free. Theorem~\ref{lineprop_gen} gives us that the aforementioned line contains a primitive normal element. This settles the assertion in Theorem~\ref{lineprop} regarding the weak line property. The assertion about the translate property is immediate.

	\section{Further research}\label{further_research}
	
	We conclude this paper with a short discussion about future developments in the area that was covered in this work.
	
	A first interesting direction would be to try to prove Theorem~\ref{thm:fuwan} (partially or in its full generality) using function field arguments.
	
	Another possible direction is studying the situation when $\alpha\theta$ is not $(x^m-1)/(x-1)$-free, as they were defined in Section~\ref{line}, and/or trying to extend Theorem~\ref{lineprop} to the (nonweak) line property. A careful look in the proof of Lemma~\ref{lemma:N} should indicate the technical reasons behind the restrictions that were put in this work.
	
	A yet different direction would be to generalize the results of this work to the existence of either $r$-primitive and/or $k$-normal elements, where some $\Fm$ is called \emph{$k$-normal} over $\F$ if its $\F$-conjugates produce an $\F$-vector space of dimension $m-k$. Recent effective characterizations for $r$-primitive and $k$-normal elements, that could potentially be applied in this setting, have been established recently in \cite{cohenkapetanakisreis22} and \cite{kapetanakisreis25}, respectively.
	
	From a slightly different point of view, in this work we studied the existence of primitive normal elements of $\Fm/\F$ within the set $\mathcal A_f := \{f(x) : x\in\F\}$, where $f\in\F[x]$ is linear. In \cite{perel2}, the authors work with the prime extension $\mathbb F_{p^m}/\mathbb F_p$ and study the existence of primitive elements within $\mathcal A_f$ where $f\in\mathbb F_p[x]$ is required to be irreducible. In that spirit, we believe that establishing the existence of primitive normal elements of the extension $\Fm/\F$ within $\mathcal A_f$, for irreducible $f\in\F[x]$, would be feasible but nontrivial.
	
	Finally, an intriguing question on this line of research is computing the exact value of the numbers that are proven asymptotically to exist in Theorem~\ref{lineprop}. In this spirit, in 1983 Cohen~\cite{cohen83} showed that $TP(2) = LP(2) = 1$. In 2009, in \cite{cohen09}, the same author showed that $TP(3) = 37$. In 2019, Bailey et al.~\cite{baileycohensutherlandtrudgian19} proved that $LP(3) = 37$ and that $73 \leq TP(4) \leq LP(4) \leq 10282$. Also, in 2020, Cohen and Kapetanakis~\cite{cohenkapetanakis20} established $TP_2(2) = LP_2(2) = 41$. Within the context of Theorem~\ref{lineprop} and given that in the quadratic extension $\mathbb F_{q^2}/\F$ all primitive elements are also normal over $\F$, the fact that $LP(2) = 1$ implies $LPN(2) = 1$. However, computing some of the numbers $TPN(m)$ or $WLPN(m)$, for $m\geq 3$ as in Theorem~\ref{lineprop}, is a nontrivial task that should push theoretical and computational methods to their limits.
		
	\section{Acknowledgments}
	
	The authors are grateful to Igor Shparlinski for a helpful conversation and his useful comments, in particular for pointing out \cite{perel2}. We also thank the reviewers for their thoughtful comments and efforts towards improving the manuscript.


\end{document}